\theoremstyle{plain} 
\newtheorem{thm}{Theorem}[section]
\newtheorem{prop}[thm]{Proposition}
\newtheorem{lem}[thm]{Lemma}
\newtheorem{cor}[thm]{Corollary}
\theoremstyle{definition}
\newtheorem{defn}[thm]{Definition}
\newtheorem{rem}[thm]{Remark}
\numberwithin{equation}{section}
\renewcommand{\theta}{\vartheta}
\renewcommand{\phi}{\varphi}
\renewcommand{\epsilon}{\varepsilon}
\renewcommand{\subset}{\subseteq}
\newcommand{\Z}{\mathbb Z}
\newcommand{\C}{\mathbb C}
\newcommand{\Aut}{G_{aut}}
\newcommand{\QBan}{G_{aut}^+}
\begin{document}

\title{Quantum automorphisms of folded cube graphs}
\author{Simon Schmidt}
\address{Saarland University, Fachbereich Mathematik, 
66041 Saarbr\"ucken, Germany}
\email{simon.schmidt@math.uni-sb.de}
\thanks{The author is supported by the DFG project \emph{Quantenautomorphismen von Graphen}. The author is grateful to his supervisor Moritz Weber for proofreading the article and for numerous discussions on the topic. He also wants to thank Julien Bichon and David Roberson for helpful comments and suggestions. Furthermore, he thanks the referee for useful comments and for correcting a mistake in the proof of Theorem \ref{noncomm}.}
\date{\today}
\subjclass[2010]{46LXX (Primary); 20B25, 05CXX (Secondary)}
\keywords{finite graphs, graph automorphisms, automorphism groups, quantum automorphisms, quantum groups, quantum symmetries}

\begin{abstract}
We show that the quantum automorphism group of the Clebsch graph is $SO_5^{-1}$. This answers a question by Banica, Bichon and Collins from 2007. More general for odd $n$, the quantum automorphism group of the folded $n$-cube graph is $SO_n^{-1}$. Furthermore, we show that if the automorphism group of a graph contains a pair of disjoint automorphisms this graph has quantum symmetry. 
\end{abstract}

\maketitle

\section*{Introduction}

The concept of quantum automorphism groups of finite graphs was introduced by Banica and Bichon in \cite{QBan,QBic}. It generalizes the classical automorphism groups of graphs within the framework of compact matrix quantum groups. We say that a graph has no quantum symmetry if the quantum automorphism group coincides with its usual automorphism group. A natural question is: When does a graph have no quantum symmetry? This has been studied in \cite{Che} for some graphs on $p$ vertices, $p$ prime, and more recently the author showed in \cite{QAutPetersen} that the Petersen graph does not have quantum symmetry. Also Lupini, Man\v{c}inska and Roberson proved that almost all graphs have trivial quantum automorphim group in \cite{Nonlocal}, which implies that almost all graphs do not have quantum symmetry.

In this article we develop a tool for detecting quantum symmetries namely we show that a graph has quantum symmetry if its automorphism group contains a pair of disjoint automorphisms (Theorem \ref{noncomm}). As an example, we apply it to the Clebsch graph and obtain that it does have quantum symmetry (Corollary \ref{C}).

We even go further and prove that the quantum automorphism group of the Clebsch graph is $SO_5^{-1}$, the $q$-deformation at $q=-1$ of $SO_5$, answering a question from \cite{survey}. For this we use the fact that the Clebsch graph is the folded 5-cube graph. This can be pushed further to more general folded $n$-cube graphs: In \cite{hyperoctahedral}, two generalizations of the hyperoctahedral group $H_n$ are given, one of them being $O_n^{-1}$ as quantum symmetry group of the hypercube graph. To prove that $O_n^{-1}$ is the quantum symmetry group of the hypercube graph, Banica, Bichon and Collins used the fact that the hypercube graph is a Cayley graph. It is also well known that the folded cube graph is a Cayley graph. We use similar techniques as in \cite{hyperoctahedral} to show that for odd $n$,  the quantum symmetry group of the folded $n$-cube graph is $SO_n^{-1}$ which is the quotient of $O_n^{-1}$ by some quantum determinant condition (Theorem \ref{main}). This constitutes our main result. 

\section{Preliminaries}

\subsection{Compact matrix quantum groups}

We start with the definition of compact matrix quantum groups which were defined by Woronowicz \cite{CMQG1,CMQG2} in 1987. See \cite{Nesh, Tim} for recent books on compact quantum groups. 

\begin{defn}
A \emph{compact matrix quantum group} $G$ is a pair $(C(G),u)$, where $C(G)$ is a unital (not necessarily commutative) $C^*$-algebra which is generated by $u_{ij}$, $1 \leq i,j \leq n$, the entries of a matrix $u \in M_n(C(G))$. Moreover, the *-homomorphism $\Delta: C(G) \to C(G) \otimes C(G)$, $u_{ij} \mapsto \sum_{k=1}^n u_{ik} \otimes u_{kj}$ must exist, and $u$ and its transpose $u^{t}$ must be invertible. 
\end{defn}

An important example of a compact matrix quantum group is the quantum symmetric group $S_n^+$ due to Wang \cite{WanSn}. It is the compact matrix quantum group, where
\begin{align*}
C(S_n^+) := C^*(u_{ij}, \, 1 \leq i,j \leq n \, | \, u_{ij} = u_{ij}^* = u_{ij}^2, \, \sum_{l} u_{il} = \sum_{l} u_{li} =1).
\end{align*} 
 
An action of a compact matrix quantum group on a $C^*$-algebra is defined as follows (\cite{Pod, WanSn}).

\begin{defn} 
Let $G=(C(G), u)$ be a compact matrix quantum group and let $B$ be a $C^*$-algebra. 
A \emph{(left) action} of $G$ on $B$ is a unital *-homomorphism $\alpha: B \to B\otimes C(G)$ such that
\begin{itemize}
\item[(i)] $(\mathrm{id} \otimes \Delta ) \circ \alpha = (\alpha \otimes \mathrm{id}) \circ \alpha$
 \item[(ii)] $\alpha(B)(1 \otimes C(G))$ is linearly dense in $B \otimes C(G)$. 
\end{itemize} 
\end{defn}

 In \cite{WanSn}, Wang showed that $S_n^+$ is the universal compact matrix quantum group acting on $X_n = \{1,\dots,n\}$. This action is of the form $\alpha: C(X_n) \to C(X_n) \otimes C(S_n^+)$, 
\begin{align*}
\alpha(e_i) = \sum_{j} e_j \otimes u_{ji}. 
\end{align*}

\subsection{Quantum automorphism groups of finite graphs}
In 2005, Banica \cite{QBan} gave the following definition of a quantum automorphism group of a finite graph.

\begin{defn}
Let $\Gamma =(V, E)$ be a finite graph with $n$ vertices and adjacency matrix $\varepsilon \in M_n(\{0,1\})$.  The \emph{quantum automorphism group} $\QBan(\Gamma)$ is the compact matrix quantum group $(C(\QBan(\Gamma)),u)$, where $C(\QBan(\Gamma))$ is the universal $C^*$-algebra with generators $u_{ij}, 1 \leq i,j \leq n$ and relations 
\begin{align}
&u_{ij} = u_{ij}^* = u_{ij}^2 &&1 \leq i,j \leq n,\label{QA1}\\ 
&\sum_{l=1}^n u_{il} = 1 = \sum_{l=1}^n u_{li}, &&1 \leq i \leq n,\label{QA2}\\
&u \varepsilon = \varepsilon u \label{QA3},
\end{align}
where \eqref{QA3} is nothing but $\sum_ku_{ik}\epsilon_{kj}=\sum_k\epsilon_{ik}u_{kj}$.
\end{defn}

\begin{rem}
There is another definition of a quantum automorphism group of a finite graph by Bichon in \cite{QBic}, which is a quantum subgroup of $\QBan(\Gamma)$. But this article concerns $\QBan(\Gamma)$. See \cite{SWe} for more on quantum automorphism groups of graphs. 
\end{rem}

The next definition is due to Banica and Bichon \cite{BanBic}. We denote by $\Aut(\Gamma)$ the usual automorphism group of a graph $\Gamma$. 

\begin{defn}
Let $\Gamma = (V,E)$ be a finite graph. We say that $\Gamma$ has \emph{no quantum symmetry} if $C(\QBan(\Gamma))$ is commutative, or equivalently
\begin{align*}
C(\QBan(\Gamma)) = C(\Aut(\Gamma)).
\end{align*}
If $C(\QBan(\Gamma))$ is non-commutative, we say that $\Gamma$ has \emph{quantum symmetry}. 
\end{defn} 
Note that $\Aut(\Gamma) \subset \QBan(\Gamma)$, so in general a graph $\Gamma$ has more quantum symmetries than symmetries. 

\subsection{Compact matrix quantum groups acting on graphs}

An action of a compact matrix quantum group on a graph is an action on the functions on the vertices, but with additional structure. This concept was introduced by Banica and Bichon \cite{QBan,QBic}. 

\begin{defn}
Let $\Gamma = (V,E)$ be a finite graph and $G$ be a compact matrix quantum group. An \emph{action of $G$ on $\Gamma$} is an action of $G$ on $C(V)$ such that the magic unitary matrix $(v_{ij})_{1 \leq i,j \leq |V|}$ associated to the formular 
\begin{align*}
\alpha(e_i) = \sum_{j=1}^{|V|} e_j \otimes v_{ji}
\end{align*}
commutes with the adjacency matrix, i.e $v\varepsilon = \varepsilon v$. 
\end{defn}

\begin{rem}
If $G$ acts on a graph $\Gamma$, then we have a surjective *-homomorphism $\phi: C(\QBan(\Gamma)) \to C(G)$, $u \mapsto v$. 
\end{rem}

The following theorem shows that commutation with the magic unitary $u$ yields invariant subspaces. 

\begin{thm}[Theorem 2.3 of \cite{QBan}]\label{preserve}
Let $\alpha: C(X_n) \to C(X_n) \otimes C(G), \alpha(e_i) = \sum_{j} e_j \otimes v_{ji}$ be an action, where $G$ is a compact matrix quantum group and let $K$ be a linear subspace of $C(X_n)$. The matrix $(v_{ij})$ commutes with the projection onto $K$ if and only if $\alpha(K) \subset K \otimes C(G)$. 
\end{thm}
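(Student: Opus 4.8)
The plan is to recast both sides of the asserted equivalence in terms of the matrix $v$, and then, from the invariance of $K$, to manufacture an intertwiner which is forced to be the projection onto $K$.

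Identifying $C(X_n)\otimes C(G)$ with the free module $C(G)^n$, the relation $\alpha(e_i)=\sum_j e_j\otimes v_{ji}$ says that $\alpha(\xi)=v\xi$ for $\xi\in C(X_n)\subset C(G)^n$. Let $P\in M_n(\mathbb C)$ be the orthogonal projection onto $K$ and write $P_\otimes:=P\otimes 1\in M_n(C(G))$ for the corresponding scalar matrix. Then $\alpha(K)\subset K\otimes C(G)$ is equivalent to $(1-P_\otimes)\,v\,P_\otimes=0$, i.e.\ to $vP_\otimes=P_\otimes vP_\otimes$, whereas ``$v$ commutes with the projection onto $K$'' means $vP_\otimes=P_\otimes v$. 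One direction is immediate: if $vP_\otimes=P_\otimes v$, then $\alpha(\xi)=v\xi=vP_\otimes\xi=P_\otimes(v\xi)\in K\otimes C(G)$ for every $\xi\in K$. The content is the converse.

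For the converse I would exploit three standard facts. First, each row and each column of the magic unitary $v$ is a partition of $1$ into mutually orthogonal projections, whence $vv^{t}=v^{t}v=1$, so $v$ is a unitary in $M_n(C(G))$. Second, $(\mathrm{id}\otimes\Delta)(v)=v_{12}v_{13}$ (a reformulation of the coassociativity axiom of $\alpha$), so $\mathbb C^n$ is a unitary corepresentation of $G$. Third, after replacing $C(G)$ by the $C^*$-subalgebra generated by the $v_{ij}$ — which is again a compact matrix quantum group, and hence, being generated by a magic unitary, a quantum subgroup of $S_n^+$ (Wang's universality) of Kac type — we may assume the Haar state $h$ of $G$ is tracial. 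Now assume $(1-P_\otimes)vP_\otimes=0$; taking adjoints gives also $P_\otimes v^{*}(1-P_\otimes)=0$, and combining the two yields $vP_\otimes v^{*}=P_\otimes\,(vP_\otimes v^{*})\,P_\otimes$, so the projection $vP_\otimes v^{*}$ satisfies $0\le vP_\otimes v^{*}\le P_\otimes$. Set
\[
E(P):=(\mathrm{id}\otimes h)\bigl(v\,P_\otimes\,v^{*}\bigr)\in M_n(\mathbb C).
\]
By the standard averaging argument (conjugate $P_\otimes$ by $v$ and integrate against $h$, using $(\mathrm{id}\otimes\Delta)(v)=v_{12}v_{13}$ and invariance of $h$), $E(P)$ is an intertwiner: $v\bigl(E(P)\otimes 1\bigr)=\bigl(E(P)\otimes 1\bigr)v$. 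On the other hand, positivity of $\mathrm{id}\otimes h$ gives $E(P)\le(\mathrm{id}\otimes h)(P_\otimes)=P$, while traciality of $h$ gives $\operatorname{Tr}E(P)=h\bigl(\operatorname{Tr}(vP_\otimes v^{*})\bigr)=h\bigl(\operatorname{Tr}(v^{*}vP_\otimes)\bigr)=\operatorname{Tr}P$. Hence $P-E(P)$ is a positive scalar matrix of trace $0$, so $E(P)=P$; since $E(P)$ is an intertwiner, so is $P$, i.e.\ $vP_\otimes=P_\otimes v$.

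I expect the crux to be the converse, and within it the genuine use of the Haar state: the implication $(1-P_\otimes)vP_\otimes=0\ \Rightarrow\ P_\otimes v(1-P_\otimes)=0$ is \emph{not} a formal consequence of $v$ being unitary (an isometry in $M_n(C(G))$ need not be a unitary in the absence of a finiteness hypothesis), so the averaging lemma and the traciality of $h$ — effectively, complete reducibility of corepresentations in the Kac setting — carry the argument. As a check, when $K=\langle e_1,\dots,e_m\rangle$ is a coordinate subspace the converse is elementary: the hypothesis says the lower-left $(n-m)\times m$ block of $v$ vanishes, so, the columns of $v$ summing to $1$, the top-left $m\times m$ block has all column sums equal to $1$ and hence all of its entries sum to $m$; as each of its rows is a partial row-sum of $v$, hence a projection $\le1$, and the $m$ complementary projections add up to $0$, each of these rows sums to $1$, forcing the top-right block to vanish as well, so $v$ is block diagonal. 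This shortcut is not basis-independent, though, and so does not reach an arbitrary subspace $K$.
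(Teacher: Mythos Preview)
The paper does not give a proof of this statement at all: it is quoted verbatim as Theorem~2.3 of \cite{QBan} and used as a black box (the only argument nearby is the two--line deduction of Corollary~\ref{Eigen}). So there is nothing in the paper to compare your argument against.

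That said, your proof is correct and self--contained. The forward implication is indeed trivial. For the converse, your chain
\[
(1-P)vP=0 \ \Longrightarrow\ vPv^{*}=P(vPv^{*})P\le P,
\qquad
E(P):=(\mathrm{id}\otimes h)(vPv^{*}),
\]
together with Haar invariance $(\mathrm{id}\otimes h)\circ\Delta=h(\cdot)\,1$ to get that $E(P)$ intertwines $v$, and traciality of $h$ on the $C^{*}$--algebra generated by the entries of the magic unitary $v$ to get $\operatorname{Tr}E(P)=\operatorname{Tr}P$, does force $E(P)=P$ and hence $vP=Pv$. The reduction to the Kac setting (quantum subgroup of $S_n^{+}$) is legitimate and is exactly where traciality comes from. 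Your final ``coordinate subspace'' sanity check is also fine: once the lower--left block vanishes, the column--sum relations force each of the $m$ upper row--sums $\sum_{j\le m}v_{ij}$ to equal $1$, killing the upper--right block; as you note, this shortcut is basis--dependent and does not replace the Haar argument for general $K$.

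If you want to streamline, you can bypass the explicit trace computation: complete reducibility of finite--dimensional unitary corepresentations of a compact quantum group (Woronowicz) already says that an invariant subspace of a unitary corepresentation has an invariant complement, and in the Kac case this complement is the orthogonal one. But what you wrote is a clean, hands--on realisation of exactly that principle.
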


Looking at the spectral decomposition of the adjacency matrix, we see that this action preserves the eigenspaces of the adjacency matrix. 

\begin{cor}\label{Eigen}
Let $\Gamma=(V,E)$ be an undirected finite graph with adjacency matrix $\varepsilon$. The action $\alpha: C(V) \to C(V) \otimes C(\QBan(\Gamma))$, 
$\alpha(e_i) = \sum_{j} e_j \otimes u_{ji}$, 
preserves the eigenspaces of $\varepsilon$, i.e. $\alpha(E_\lambda) \subset E_\lambda \otimes C(\QBan(\Gamma))$ for all eigenspaces $E_\lambda$.
\end{cor}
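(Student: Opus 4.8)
The plan is to reduce the statement to the eigenprojections of $\varepsilon$ and then invoke Theorem \ref{preserve}. Since $\Gamma$ is undirected, $\varepsilon$ is a real symmetric matrix, hence diagonalizable with an orthogonal spectral decomposition $\varepsilon = \sum_{\lambda} \lambda P_\lambda$, where the sum runs over the (finitely many, distinct) eigenvalues $\lambda$ of $\varepsilon$ and $P_\lambda \in M_n(\C)$ is the orthogonal projection onto the eigenspace $E_\lambda$. The first step is to record that each $P_\lambda$ lies in the algebra generated by $\varepsilon$: by Lagrange interpolation one can write $P_\lambda = q_\lambda(\varepsilon)$ for a suitable polynomial $q_\lambda$ (namely the polynomial taking value $1$ at $\lambda$ and $0$ at the other eigenvalues).

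Next, from relation \eqref{QA3}, i.e.\ $u\varepsilon = \varepsilon u$, it follows immediately that $u$ commutes with every polynomial in $\varepsilon$, so $u P_\lambda = P_\lambda u$ for each eigenvalue $\lambda$. Here one should be slightly careful that $u$ is a matrix over the noncommutative algebra $C(\QBan(\Gamma))$ while $P_\lambda$ is a scalar matrix, so that the manipulations $u \varepsilon^k = \varepsilon^k u$ are legitimate entrywise computations; this is routine since $\varepsilon$ and the $q_\lambda$ have scalar entries.

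Finally, apply Theorem \ref{preserve} with $G = \QBan(\Gamma)$, the action $\alpha(e_i) = \sum_j e_j \otimes u_{ji}$, and the linear subspace $K = E_\lambda \subset C(V)$: since $u$ commutes with the projection $P_\lambda$ onto $E_\lambda$, the theorem gives $\alpha(E_\lambda) \subset E_\lambda \otimes C(\QBan(\Gamma))$, which is exactly the claim. I do not expect a genuine obstacle here; the only point requiring a line of justification is the passage from $u\varepsilon = \varepsilon u$ to $uP_\lambda = P_\lambda u$, i.e.\ that the eigenprojections are polynomials in the adjacency matrix.
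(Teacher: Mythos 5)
Your proposal is correct and follows exactly the paper's argument: the eigenprojections are polynomials in $\varepsilon$ by the spectral decomposition, hence commute with $u$ by relation \eqref{QA3}, and Theorem \ref{preserve} concludes. The extra detail on Lagrange interpolation is a fine (if routine) elaboration of the same proof.
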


\begin{proof}
It follows from the spectral decomposition that every projection $P_{E_{\lambda}}$ onto $E_{\lambda}$ is a polynomial in $\varepsilon$. Thus it commutes with the fundamental corepresentation $u$ and Theorem \ref{preserve} yields the assertion. 
\end{proof}

\subsection{Fourier transform}\label{Fourier}
One can obtain a $C^*$-algebra from the group $\Z_2^n$ by either considering the continuous functions $C(\Z_2^n)$ over the group or the group $C^*$-algebra $C^*(\Z_2^n)$. Since $\Z_2^n$ is abelian, we know that $C(\Z_2^n) \cong C^*(\Z_2^n)$ by Pontryagin duality. This isomorphism is given by the Fourier transform and its inverse. We write 
\begin{align*}
\Z_2^n&= \{t_1^{i_1} \dots t_n^{i_n} | i_1, \dots ,i_n \in \{0,1\}\},\\
C(\Z_2^n) &= \mathrm{span}( e_{t_1^{i_1} \dots t_n^{i_n}} \, | \, t_1^{i_1} \dots t_n^{i_n} \in \Z_2^n),\\
C^*(\Z_2^n) &= C^*(t_1, \dots , t_n \, | \,t_i = t_i^*,  t_i^2 = 1, t_i t_j = t_j t_i),\\
\intertext{where} e_{t_1^{i_1} \dots t_n^{i_n}} : \Z_2^n &\to \C, \qquad e_{t_1^{i_1} \dots t_n^{i_n}}({t_1^{j_1} \dots t_n^{j_n}}) = \delta_{i_1 j_1} \dots \delta_{i_n j_n}.
\end{align*}

The proof of the following proposition can be found in \cite{quizzy} for example. 
\begin{prop}
The *-homomorphisms
\begin{align*}
\phi: C(\Z_2^n) &\to C^*(\Z_2^n), \qquad e_{t_1^{i_1}\dots t_n^{i_n}} \to \frac{1}{2^n} \sum_{j_1, \dots, j_n=0}^1 (-1)^{i_1 j_1 + \dots + i_n j_n} t_1^{j_1} \dots t_n^{j_n}
\intertext{and} 
\psi: C^*(\Z_2^n) &\to C(\Z_2^n), \qquad  t_1^{i_1} \dots t_n^{i_n}\to  \sum_{j_1, \dots, j_n=0}^1 (-1)^{i_1 j_1 + \dots + i_n j_n}e_{t_1^{j_1}\dots t_n^{j_n}}, 
\end{align*}
where $i_1, \dots, i_n \in \{0,1\}$, are inverse to each other. The map $\phi$ is called Fourier transform, the map $\psi$ is called inverse Fourier transform. 
\end{prop}

\section{A criterion for a graph to have quantum symmetry}

In this section, we show that a graph has quantum symmetry if the automorphism group of the graph contains a certain pair of permutations. For this we need the following definition. 

\begin{defn}
Let $V = \{1,\dots, r\}$. We say that two permutations $\sigma:V \to V$ and $\tau: V \to V$ are \emph{disjoint}, if $\sigma(i) \neq i$ implies $\tau(i) =i$ and vice versa, for all $i \in V$. 
\end{defn}

\begin{thm}\label{noncomm} 
Let $\Gamma=(V,E)$ be a finite graph without multiple edges. If there exist two non-trivial, disjoint automorphisms $\sigma, \tau \in \Aut(\Gamma)$, $\mathrm{ord}(\sigma) = n, \mathrm{ord}(\tau)=m$, then we get a surjective *-homomorphism $\phi: C(\QBan(\Gamma)) \to C^*(\Z_n * \Z_m)$. In particular, $\Gamma$ has quantum symmetry if $n,m \geq 2$.
\end{thm}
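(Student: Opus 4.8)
The plan is to construct a magic unitary $u$ with entries in $C^*(\Z_n * \Z_m) \cong C^*(\Z_n) * C^*(\Z_m)$ which commutes with the adjacency matrix $\varepsilon$ and whose entries generate the whole algebra. The universal property defining $C(\QBan(\Gamma))$ then provides a $*$-homomorphism $\phi \colon C(\QBan(\Gamma)) \to C^*(\Z_n * \Z_m)$ sending the fundamental magic unitary of $\QBan(\Gamma)$ to $u$; it is surjective by construction, and since $\Z_n * \Z_m$ is non-abelian for $n,m \ge 2$, the algebra $C^*(\Z_n * \Z_m)$ is non-commutative, which forces $C(\QBan(\Gamma))$ to be non-commutative as well.

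For the construction, put $A = \{i \in V \mid \sigma(i) \neq i\}$ and $B = \{i \in V \mid \tau(i) \neq i\}$. Disjointness of $\sigma$ and $\tau$ means exactly that $A \cap B = \emptyset$; moreover $A$ is $\sigma$-invariant, $B$ is $\tau$-invariant, and $\sigma$ and $\tau$ act trivially on $V \setminus A$ and $V \setminus B$ respectively. Let $a$ denote the canonical unitary generator of $C^*(\Z_n)$, of order $n$, with minimal projections $p_0, \dots, p_{n-1}$ (so $a = \sum_k \omega^k p_k$, $\omega = e^{2\pi i /n}$), and similarly $b$, $q_0,\dots,q_{m-1}$ for $C^*(\Z_m)$. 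I would define $u$ as the block-diagonal matrix along $V = A \sqcup B \sqcup \bigl(V \setminus (A \cup B)\bigr)$ with blocks $u^{(A)}$, $u^{(B)}$ and the identity, where $u^{(A)}_{ij} = \sum_{0 \le k < n,\ \sigma^k(j) = i} p_k$ for $i,j \in A$ and $u^{(B)}_{ij} = \sum_{0 \le k < m,\ \tau^k(j) = i} q_k$ for $i,j \in B$. Using $\sigma$-invariance of $A$ for the row and column sums, one verifies that $u^{(A)}$, and likewise $u^{(B)}$ and hence $u$, are magic unitaries over $C^*(\Z_n) * C^*(\Z_m) = C^*(\Z_n * \Z_m)$. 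The point of this definition is that $u^{(A)}$ interpolates all powers of $\sigma$: applying the $l$-th character of the commutative algebra $C^*(\Z_n)$ (the one with $p_k \mapsto \delta_{kl}$) to the entries of $u^{(A)}$ returns the permutation matrix of $\sigma^l|_A$, and similarly $u^{(B)}$ interpolates the powers of $\tau$.

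The heart of the proof, and the step I expect to need the most care, is verifying $u\varepsilon = \varepsilon u$, especially the ``mixed'' $(A,B)$- and $(B,A)$-blocks in which both free factors occur. Writing $\varepsilon$ in block form along the same partition: each power $\sigma^l$ is a graph automorphism, so its (block-diagonal) permutation matrix commutes with $\varepsilon$, and comparing blocks of this scalar identity shows that, for every $l$, $P_{\sigma^l|_A}$ commutes with the $(A,A)$-block of $\varepsilon$, fixes the $(A,B)$-block and the block joining $A$ to $V\setminus(A\cup B)$, together with the transposed statements; the analogous relations hold for $\tau^l$ on the $B$-side. Since $C^*(\Z_n)$ is commutative with a separating family of characters, any matrix identity between a matrix over $C^*(\Z_n)$ and scalar matrices holds once it holds after applying each character; feeding $u^{(A)}$ and $u^{(B)}$ into this principle converts the relations above into $u^{(A)}\varepsilon_{AA} = \varepsilon_{AA}u^{(A)}$, $u^{(A)}\varepsilon_{AB} = \varepsilon_{AB}$, $\varepsilon_{AB}u^{(B)} = \varepsilon_{AB}$, and so on. Comparing the blocks of $u\varepsilon$ and $\varepsilon u$ then gives the claim (the blocks involving $V\setminus(A\cup B)$ being immediate, as $u$ restricts to the identity there); in the mixed block one gets $u^{(A)}\varepsilon_{AB} = \varepsilon_{AB} = \varepsilon_{AB}u^{(B)}$, and this collapse to a scalar identity — which is precisely what disjointness of $\sigma$ and $\tau$ provides — is what makes the argument work, since the two free factors are then never multiplied against each other.

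It remains to note that $\phi$ is surjective and that its target is non-commutative. For surjectivity, fix any $j \in A$ (which exists because $\sigma \neq \mathrm{id}$); then $p_k$ occurs as the entry $u^{(A)}_{\sigma^k(j),j}$ for each $k$, so the image of $\phi$ contains $C^*(p_0,\dots,p_{n-1}) = C^*(\Z_n)$ and, by symmetry, $C^*(\Z_m)$, hence all of $C^*(\Z_n * \Z_m)$. Finally, for $n, m \ge 2$ the group $\Z_n * \Z_m$ is non-abelian, so $C^*(\Z_n * \Z_m)$ is non-commutative; being a quotient of $C(\QBan(\Gamma))$, this shows $C(\QBan(\Gamma))$ is non-commutative, i.e.\ $\Gamma$ has quantum symmetry.
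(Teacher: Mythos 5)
Your magic unitary is, entry for entry, the same matrix the paper uses: the paper writes it globally as $u' = \sum_l \tau^l \otimes q_l + \sum_k \sigma^k \otimes p_k - \mathrm{id}$ and then observes that disjointness makes it block-diagonal along $A \sqcup B \sqcup (V\setminus(A\cup B))$ with exactly your blocks $u^{(A)}$, $u^{(B)}$ and the identity. Your verification of $u\varepsilon=\varepsilon u$ via characters of the commutative factors, reducing each block to the scalar identities $P_{\sigma^l|_A}\varepsilon_{AA}=\varepsilon_{AA}P_{\sigma^l|_A}$, $P_{\sigma^l|_A}\varepsilon_{AB}=\varepsilon_{AB}=\varepsilon_{AB}P_{\tau^l|_B}$, etc., is correct and is just a repackaging of the paper's one-line tensor computation; the observation that disjointness is what keeps the two free factors from ever being multiplied in the mixed blocks is the right way to see why the construction works. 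So up to the surjectivity step, this is essentially the paper's proof.

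There is, however, a genuine gap in your surjectivity argument. You claim that for $j\in A$ the entry $u^{(A)}_{\sigma^k(j),j}$ equals $p_k$. This is only true when the $\sigma$-orbit of $j$ has length exactly $n=\mathrm{ord}(\sigma)$: in general
\begin{align*}
u^{(A)}_{\sigma^k(j),j} \;=\; \sum_{l:\ \sigma^l(j)=\sigma^k(j)} p_l \;=\; \sum_{l\equiv k \ (\mathrm{mod}\ d)} p_l,
\end{align*}
where $d$ is the length of the cycle containing $j$, and $d$ may be a proper divisor of $n$. For instance, if $\sigma=(1\,2)(3\,4\,5)$ has order $6$, no single entry of $u^{(A)}$ is a minimal projection $p_k$; the entries are sums such as $p_0+p_2+p_4$ or $p_1+p_4$. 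The conclusion is still correct, but you must argue as the paper does: choose representatives $s_1,\dots,s_a$ of cycles whose lengths have least common multiple $n$, so that the tuple $(\sigma^k(s_1),\dots,\sigma^k(s_a))$ determines $k$; then the product $u^{(A)}_{\sigma^k(s_1),s_1}\cdots u^{(A)}_{\sigma^k(s_a),s_a}$ is the sum of $p_l$ over the intersection of the corresponding congruence classes, which is the single class $\{k\}$, hence equals $p_k$. With that repair (and the same for $\tau$), the image of $\phi$ contains all $p_k$ and $q_l$ and surjectivity follows.
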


\begin{proof}
Let $\sigma, \tau \in \Aut(\Gamma)$ be non-trivial disjoint automorphisms with $\mathrm{ord}(\sigma) = n, \mathrm{ord}(\tau)=m$. Define 
\begin{align*}
A&:=C^*(p_1, \dots, p_n, q_1, \dots, q_m| p_k=p_k^* = p_k^2, q_l=q_l^*=q_l^2, \sum_{k=1}^n p_k =1= \sum_{l=1}^m q_l)\\
&\cong C^*(\Z_n * \Z_m).
\end{align*} 
We want to use the universal property to get a surjective *-homomorphism \linebreak$\phi: C(\QBan(\Gamma)) \to A$. This yields the non-commutativity of $\QBan(\Gamma)$, since $p_k,q_l$ do not have to commute.
In order to do so, define 
\begin{align*}
u' := \sum_{l=1}^m \tau^l \otimes q_l + \sum_{k=1}^n \sigma^k \otimes p_k - \mathrm{id}_{\mathrm{M_r}(\mathbb{C}) \otimes A} \in \mathrm{M_r}(\mathbb{C}) \otimes A \cong \mathrm{M_r}(A),
\end{align*}
where $\tau^l, \sigma^k$ denote the permutation matrices corresponding to $\tau^l, \sigma^k \in \Aut(\Gamma)$. This yields
\begin{align*}
u'_{ij} = \sum_{l=1}^m \delta_{j\tau^l(i)} \otimes q_l + \sum_{k=1}^n \delta_{j\sigma^k(i)} \otimes p_k - \delta_{ij} \in \C \otimes A \cong A.
\end{align*}
Now, we show that $u'$ does fulfill the relations of $u \in \mathrm{M_r}(\mathbb{C}) \otimes A$, the fundamental representation of $\QBan(\Gamma)$. Since we have $\tau^l, \sigma^k \in \Aut(\Gamma)$, it holds $\tau^l \varepsilon = \varepsilon \tau^l$ and $\sigma^k \varepsilon = \varepsilon \sigma^k$ for all $1 \leq l \leq m$, $1 \leq k \leq n$, where $\varepsilon$ denotes the adjacency matrix of $\Gamma$. Therefore, we have\allowdisplaybreaks
\begin{align*}
u' (\varepsilon \otimes 1) &= \left(\sum_{l=1}^m \tau^l \otimes q_l + \sum_{k=1}^n \sigma^k \otimes p_k - \mathrm{id}_{\mathrm{M_r}(\mathbb{C}) \otimes A}\right) (\varepsilon \otimes 1)\\
&=\sum_{l=1}^m  \tau^l \varepsilon \otimes q_l + \sum_{k=1}^n  \sigma^k \varepsilon \otimes p_k - (\varepsilon \otimes 1)\\
&=\sum_{l=1}^m \varepsilon \tau^l \otimes q_l + \sum_{k=1}^n \varepsilon \sigma^k \otimes p_k - (\varepsilon \otimes 1)\\
&= (\varepsilon \otimes 1) \left(\sum_{l=1}^m \tau^l \otimes q_l + \sum_{k=1}^n \sigma^k \otimes p_k - \mathrm{id}_{\mathrm{M_r}(\mathbb{C}) \otimes A}\right)\\
&=(\varepsilon \otimes 1) u'.
\end{align*}
Furthermore, it holds
\begin{align*}
\sum_{i=1}^r u'_{ji} &=\sum_{i=1}^r \left(\sum_{l=1}^m \delta_{i\tau^l(j)} \otimes q_l + \sum_{k=1}^n \delta_{i\sigma^k(j)} \otimes p_k\right) - 1\otimes 1\\
&= 1 \otimes \left(\sum_{l=1}^m q_l \right)+ 1 \otimes \left(\sum_{k=1}^n p_k\right) - 1 \otimes 1\\
&= 1 \otimes 1.
\end{align*}
A similar computation shows $\sum_{i=1}^r u'_{ij} = 1 \otimes 1$. Since $\tau$ and $\sigma$ are disjoint, we have 
\begin{align*}
u'_{ij} = \sum_{l=1}^m \delta_{j\tau^l(i)} \otimes q_l + \sum_{k=1}^n \delta_{j\sigma^k(i)} \otimes p_k - \delta_{ij} =
\begin{cases}
\sum_{k \in N_{ij}} p_k, \text{ if } \sigma(i) \neq i\\
\sum_{l \in M_{ij}} q_l, \text{ if } \tau(i) \neq i\\
\delta_{ij}, \text{ otherwise,}
\end{cases}
\end{align*}
where $N_{ij} = \{ k \in \{1, \dots, n\}; \, \sigma^k(i)=j\}, M_{ij} = \{l \in \{1, \dots, m\}; \, \tau^l(i) =j\}$. Thus, all entries of $u'$ are projections. By the universal property, we get a *-homomorphism $\phi: C(\QBan(\Gamma)) \to A, u \mapsto u'.$

It remains to show that $\phi$ is surjective. We know $\mathrm{ord}(\sigma)=n$. By decomposing $\sigma$ into a product of disjoint cycles, we see that there exist $s_1, \dots, s_a \in V$ such that for all $k_1\neq k_2, k_1, k_2 \in \{1, \dots ,n\}$, we have 
\begin{align*}
(\sigma^{k_1}(s_1), \dots, \sigma^{k_1}(s_a)) \neq (\sigma^{k_2}(s_1), \dots, \sigma^{k_2}(s_a)).
\end{align*}
By similar considerations, there exist $t_1, \dots, t_b \in V$ such that 
\begin{align*}
(\tau^{l_1}(t_1), \dots, \tau^{l_1}(t_b)) \neq (\tau^{l_2}(t_1), \dots, \tau^{l_2}(t_b))
\end{align*}
for $l_1\neq l_2, l_1,l_2 \in \{1,\dots,m\}$. Therefore, we have 
\begin{align*}
\phi(u_{s_1\sigma^k(s_1)}\dots u_{s_a\sigma^k(s_a)}) &=u'_{s_1\sigma^k(s_1)}\dots u'_{s_a\sigma^k(s_a)} = p_k,\\
\phi(u_{t_1\tau^l(t_1)}\dots u_{t_b\tau^l(t_b)}) &=u'_{t_1\tau^l(t_1)}\dots u'_{t_b\tau^l(t_b)} = q_l
\end{align*}
 for all $k \in \{1 ,\dots ,n\}, l \in \{1 ,\dots, m\}$ and since $A$ is generated by $p_k$ and $q_l$, $\phi$ is surjective. 
\end{proof}

\begin{rem}
Let $K_4$ be the full graph on 4 points. We know that $\Aut(K_4) = S_4$ and $\QBan(K_4)=S_4^+$. We have disjoint automorphisms in $S_4$, where for example $\sigma=(12), \tau=(34) \in S_4$ give us the well known surjective *-homomorphism 
\begin{align*}
\varphi:C(S_4^+) &\to C^*(p,q \, | \, p=p^*=p^2, q = q^* =q^2), \\
u &\mapsto \begin{pmatrix} p&1-p&0&0\\1-p&p&0&0\\0&0&q&1-q\\0&0&1-q&q\end{pmatrix}, 
\end{align*}
yielding the non-commutativity of $S_4^+$. 
\end{rem}

\begin{rem}
Let $\Gamma=(V,E)$ be a finite graph without multiple edges, where there exist two non-trivial, disjoint automorphisms $\sigma, \tau \in \Aut(\Gamma)$. To show that $\Gamma$ has quantum symmetry it is enough to see that we have the surjective *-homomorphism 
\begin{align*}
\phi': C(\QBan(\Gamma)) &\to C^*(p,q \, | \,p=p^*=p^2, q = q^* =q^2), \\
u &\mapsto \sigma \otimes p + \tau \otimes q + \mathrm{id}_{M_r(\C)} \otimes (1-q-p).
\end{align*} 
\end{rem}

\begin{rem}
At the moment, we do not have an example of a graph $\Gamma$, where $\Aut(\Gamma)$ does not contain two disjoint automorphisms but the graph has quantum symmetry. 
\end{rem}

\section{The Clebsch graph has quantum symmetry}

As an application of Theorem \ref{noncomm}, we show that the Clebsch graph does have quantum symmetry. In Section \ref{sect}, we will study the quantum automorphism group of this graph. 

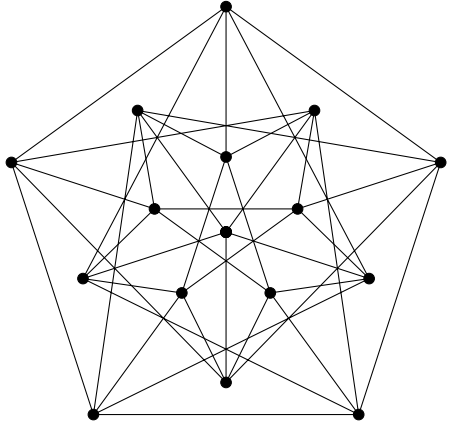
\begin{figure}[H]
\begin{center}
\begin{tikzpicture}
\draw (18:1cm) -- (162:1cm) -- (306:1cm) -- (90:1cm) -- (234:1cm) -- cycle;
\draw (18:3cm) -- (90:3cm) -- (162:3cm) -- (234:3cm) -- (306:3cm) -- cycle;
\draw (18:1cm) -- (54:2cm) --(90:1cm) -- (126:2cm) -- (162:1cm) -- (198:2cm) -- (234:1cm) -- (270:2cm) -- (306:1cm) -- (342:2cm) -- cycle;
\draw (18:3cm) -- (126:2cm) -- (234:3cm)-- (342:2cm) -- (90:3cm) -- (198:2cm) -- (306:3cm) -- (54:2cm) -- (162:3cm) -- (270:2cm) -- cycle;
\foreach \x in {18,90,162,234,306}
{\draw (\x:1cm) -- (\x:3cm);
\draw[black,fill=black] (\x:3cm) circle (2pt);
\draw[black,fill=black] (\x:1cm) circle (2pt);}
\foreach \x in {54, 126, 198, 270, 342}
{\draw[black,fill=black] (\x:2cm) circle (2pt);
\draw[black,fill=black] (\x:0cm) circle (2pt);
\draw (\x:0cm) -- (\x:2cm);}
\end{tikzpicture}
\end{center}
\caption{\label{figure}The Clebsch graph}
\end{figure}

\begin{prop}
The Clebsch graph has disjoint automorphisms.
\end{prop}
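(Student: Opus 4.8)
The plan is to exhibit two concrete non-trivial automorphisms of the Clebsch graph whose supports are disjoint, so that Theorem~\ref{noncomm} applies and yields quantum symmetry. The most efficient route uses the description of the Clebsch graph as the folded $5$-cube, i.e.\ as a Cayley graph of $\Z_2^4$ (identifying $\Z_2^5/\langle(1,1,1,1,1)\rangle \cong \Z_2^4$) with connection set given by the images of the standard basis vectors together with $(1,1,1,1,1)$. In this model, coordinate permutations from $S_5$ act as graph automorphisms, and more importantly each translation $x \mapsto x+g$ by a group element $g$ is an automorphism, since a Cayley graph is vertex-transitive via translations. I would first recall (with a brief justification or a citation to the folded-cube literature) that $\Aut(\Gamma)$ contains the translation subgroup $\cong \Z_2^4$, every non-identity element of which is a fixed-point-free involution.

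Given this, the key step is simply to pick two distinct non-identity translations $T_g, T_h$ and observe that disjointness in the sense of the preceding definition requires that whenever $T_g$ moves a vertex, $T_h$ fixes it, and vice versa. But non-trivial translations are fixed-point-free, so their supports are all of $V$ and hence never disjoint from one another. So translations alone do not suffice, and I would instead combine a translation with an involution that has a large fixed-point set. The cleaner choice: work with the Clebsch graph directly as a graph on $16$ vertices and use the fact that it is strongly regular and edge-transitive with a rich automorphism group (order $1920$). Concretely, identify the vertices with the $16$ even-weight subsets of a $5$-set (or with $\F_2^4$), and take $\sigma$ to be the transposition-type automorphism induced by a suitable local symmetry fixing many vertices, and $\tau$ an analogous one supported on the complementary set of moved vertices.

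The honest cleanest argument, and the one I would actually carry out: realize the Clebsch graph on $\Z_2^4$ with the Cayley structure above, let $\sigma$ be the automorphism that swaps two vertices $v$ and $v'$ at distance realized by a single "free" coordinate direction while fixing everything else is \emph{not} available in a Cayley graph; so instead I would use a reflection-type element. Pick a coordinate $i \in \{1,\dots,5\}$ and consider the automorphism $\rho_i$ coming from the $S_5$-action that transposes coordinates $i$ and $j$; its fixed vertices are exactly those subsets symmetric in $i,j$. Then choose a \emph{second} transposition $\rho_{k\ell}$ on a disjoint pair of coordinates $\{k,\ell\}$ with $\{i,j\}\cap\{k,\ell\}=\emptyset$ (possible since $n=5$ gives us room for two disjoint $2$-element subsets). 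The vertices moved by $\rho_{ij}$ are those not symmetric in $\{i,j\}$; the vertices moved by $\rho_{k\ell}$ are those not symmetric in $\{k,\ell\}$ — these two sets of moved vertices still overlap, so this still fails.

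I will therefore take the approach of composing a coordinate transposition with a translation. Let $\tau = \rho_{ij}$ (coordinate swap) and let $\sigma$ be the translation by the element $g = e_i + e_j$ (the image of the indicator vector of $\{i,j\}$ in $\Z_2^5/\langle \mathbf{1}\rangle$). One checks that $\sigma$ fixes a vertex $x$ iff $x = x+g$, i.e.\ never (translations are free), so this again fails the disjointness test against anything with support not equal to $V$. The resolution is that disjointness is genuinely restrictive, so the \textbf{main obstacle} is simply \emph{finding} the pair; once found, verification is a finite check. I expect the working pair to be two automorphisms each of which swaps a single pair of adjacent (or non-adjacent) vertices, exploiting that the Clebsch graph — being distance-regular of diameter $2$ with enough symmetry — admits automorphisms of small support; concretely one locates, inside the order-$1920$ group $\Aut(\Gamma)$, two involutions whose $2$-element supports are disjoint, e.g.\ by taking a stabilizer of a vertex (a subgroup of order $120 \cong S_5$ acting on the $5$ neighbors and $10$ second-neighbors) and producing within two disjoint "blocks" a transposition each. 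So the steps are: (1) fix the folded-$5$-cube / $\F_2^4$ model and record $|\Aut| = 1920$; (2) locate two disjoint involutions by an explicit block decomposition of the vertex set under a convenient subgroup; (3) verify they are graph automorphisms by checking they preserve adjacency on the (finitely many) edge-orbits; (4) invoke Theorem~\ref{noncomm} with $n=m=2$. The main difficulty is purely step~(2) — making the disjoint supports explicit — and I would present it by exhibiting the two permutations in cycle notation on a labelled vertex set.
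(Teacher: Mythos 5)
There is a genuine gap: the entire content of this proposition is the explicit exhibition of a disjoint pair, and your proposal never produces one. You correctly rule out several candidate pairs (two translations; two coordinate transpositions; a transposition paired with a pure translation), but the argument then ends with ``I expect the working pair to be two automorphisms each of which swaps a single pair of \dots vertices'' plus a plan to locate them. That expectation is in fact provably wrong for this graph: every automorphism of the folded $5$-cube has the form $x \mapsto \pi(x) + g$ with $\pi$ induced by a coordinate permutation of $\Z_2^5/\langle \mathbf{1}\rangle$ and $g \in \Z_2^4$, so its fixed-point set is either empty or a coset of $\ker(\pi - \mathrm{id})$, hence of cardinality $0$ or a power of $2$. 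An automorphism moving exactly $2$ of the $16$ vertices would have $14$ fixed points, which is impossible. So step (2) of your plan, as described, cannot be carried out, and without an explicit pair the proposition is not proved. The paper's proof is nothing more and nothing less than this missing step: it labels the $16$ vertices and writes down $\sigma = (2\,3)(6\,7)(10\,11)(14\,15)$ and $\tau = (1\,4)(5\,8)(9\,12)(13\,16)$, each moving $8$ vertices and fixing the complementary $8$.

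You were actually one observation away from a clean construction in the Cayley model you set up. Take $\sigma: x \mapsto \rho_{ij}(x)$ (the coordinate swap), which fixes exactly the $8$ vertices with $x_i = x_j$, and take $\tau: x \mapsto \rho_{ij}(x) + (e_i + e_j)$, i.e.\ the \emph{composition} of that same swap with the translation you considered, rather than the translation alone. Then $\tau$ fixes $x$ iff $x_i \neq x_j$, so the supports of $\sigma$ and $\tau$ are complementary $8$-element sets and the two automorphisms are disjoint in the required sense. Both are each a product of four transpositions, matching the shape of the paper's explicit pair.
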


\begin{proof}
We label the graph as follows
\begin{figure}[H]
\begin{center}
\begin{tikzpicture}[scale=0.75, transform shape]
\foreach \x in {18,90,162,234,306}
{\draw[black,fill=black] (\x:3cm) circle (2pt);
\draw[black,fill=black] (\x:1cm) circle (2pt);}
\foreach \x in {54, 126, 198, 270, 342}
{\draw[black,fill=black] (\x:2cm) circle (2pt);
\draw[black,fill=black] (\x:0cm) circle (2pt);}
\coordinate[label=above:$1$] (1) at (90:3cm);
\coordinate[label=above:$2$] (2) at (18:3cm);
\coordinate[label=above:$3$] (3) at (198:2cm);
\coordinate[label=above:$4$] (4) at (306:3cm);
\coordinate[label=above:$5$] (5) at (162:3cm);
\coordinate[label=above:$6$] (6) at (162:1cm);
\coordinate[label=above:$7$] (7) at (270:2cm);
\coordinate[label=above:$8$] (8) at (306:1cm);
\coordinate[label=above:$9$] (9) at (90:1cm);
\coordinate[label=above:$10$] (10) at (234:1cm);
\coordinate[label=above:$11$] (11) at (126:2cm);
\coordinate[label=above:$12$] (12) at (234:3cm);
\coordinate[label=above:$13$] (13) at (54:2cm);
\coordinate[label=above:$14$] (14) at (18:1cm);
\coordinate[label=above:$15$] (15) at (90:0cm);
\coordinate[label=above:$16$] (16) at (342:2cm);
\end{tikzpicture}
\end{center}
\end{figure}
Then we get two non-trivial disjoint automorphisms of this graph
\begin{align*}
\sigma &= (2\,3)(6\,7)(10\,11)(14\,15),\\
\tau &= (1\,4)(5\,8)(9\,12)(13\,16).
\end{align*}
\end{proof}

\begin{cor}\label{C}
The Clebsch graph does have quantum symmetries, \linebreak i.e. $C(G_{aut}^+(\Gamma_{Clebsch}))$ is non-commutative. 
\end{cor}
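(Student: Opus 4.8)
The plan is to combine the preceding Proposition with Theorem \ref{noncomm}, which does all the work. The Proposition exhibits two non-trivial permutations $\sigma = (2\,3)(6\,7)(10\,11)(14\,15)$ and $\tau = (1\,4)(5\,8)(9\,12)(13\,16)$ of the vertex set which are automorphisms of the Clebsch graph. They are disjoint: the points moved by $\sigma$ are $\{2,3,6,7,10,11,14,15\}$ while those moved by $\tau$ are $\{1,4,5,8,9,12,13,16\}$, and these sets are disjoint. Being products of transpositions, both permutations have order $2$.

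First I would observe that the Clebsch graph is a simple graph, hence has no multiple edges, so Theorem \ref{noncomm} applies. Applying it with $n = \mathrm{ord}(\sigma) = 2$ and $m = \mathrm{ord}(\tau) = 2$ yields a surjective *-homomorphism $\phi \colon C(\QBan(\Gamma_{Clebsch})) \to C^*(\Z_2 * \Z_2)$. Since $n, m \geq 2$, the theorem already delivers the conclusion that $\Gamma_{Clebsch}$ has quantum symmetry; to spell it out, $C^*(\Z_2 * \Z_2)$ is the universal C*-algebra generated by two projections (equivalently, the group C*-algebra of the infinite dihedral group), and this algebra is non-commutative. As a quotient of a commutative C*-algebra would again be commutative, surjectivity of $\phi$ forces $C(\QBan(\Gamma_{Clebsch}))$ to be non-commutative, which is exactly the assertion.

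There is essentially no obstacle at this point: all the substance has been carried by the Proposition (checking that $\sigma$ and $\tau$ genuinely preserve adjacency, a finite verification against the labelled picture) and by Theorem \ref{noncomm}. The one fact worth recording is the standard observation that $C^*(\Z_2 * \Z_2)$ — equivalently the universal unital C*-algebra generated by two self-adjoint idempotents — is non-commutative; this follows either from the explicit quotient $\phi'$ described in the Remark after Theorem \ref{noncomm}, or simply from the fact that $\Z_2 * \Z_2$ is a non-abelian group.
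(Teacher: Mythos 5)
Your argument is correct and follows the same route as the paper: invoke the Proposition giving the disjoint order-two automorphisms $\sigma$ and $\tau$ and apply Theorem \ref{noncomm}, noting that the surjection onto the non-commutative algebra $C^*(\Z_2 * \Z_2)$ forces non-commutativity of $C(\QBan(\Gamma_{Clebsch}))$. The paper merely adds the explicit block form of the resulting magic unitary, which your proof does not need.
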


\begin{proof}
By Theorem \ref{noncomm}, we get that $C(G_{aut}^+(\Gamma_{Clebsch}))$ is non-commutative. Looking at the proof of Theorem \ref{noncomm}, we get the surjective *-homomorphism \linebreak$\varphi:C(G_{aut}^+(\Gamma_{Clebsch})) \to C^*(p,q \, | \, p = p^* = p^2, q = q^* = q^2)$, 
\begin{align*}
u &\mapsto u'=\begin{pmatrix} u''&0&0&0\\ 0&u''&0&0\\0&0&u''&0\\0&0&0&u''\end{pmatrix},
\intertext{where}
u'' &=\begin{pmatrix} q&0&0&1-q\\0&p&1-p&0\\0&1-p&p&0\\1-q&0&0&q\end{pmatrix}.
\end{align*}
\end{proof}

\begin{rem}~
\begin{itemize}
\item[(i)]The Clebsch graph is the folded $5$-cube graph, which will be introduced in Section \ref{sect}. There we will study the quantum automorphism group for $(2m+1)$-folded cube graphs going far beyond Corollary \ref{C}.
\item[(ii)] Using Theorem \ref{noncomm}, it is also easy to see that the folded cube graphs have quantum symmetry, but this will also follow from our main result (Theorem \ref{main}). 
\end{itemize}
\end{rem}

\section{The quantum group $SO_n^{-1}$}

Now, we will have a closer look at the quantum group $SO_n^{-1}$, but first we define $O_n^{-1}$, which appeared in \cite{hyperoctahedral} as the quantum automorphism group of the hypercube graph. For both it is immediate to check that the comultiplication $\Delta$ is a *-homomorphism. 

\begin{defn}
We define $O_n^{-1}$ to be the compact matrix quantum \linebreak group $(C(O_n^{-1}), u)$, where $C(O_n^{-1})$ is the universal $C^*$-algebra with generators $u_{ij}$, $1 \leq i,j \leq n$ and relations
\begin{align}
&u_{ij} = u_{ij}^*, &&1\leq i,j \leq n, \label{7.1}\\
&\sum_{k=1}^n u_{ik}u_{jk} = \sum_{k=1}^n u_{ki}u_{kj} = \delta_{ij}, && 1\leq i,j \leq n,\label{7.2}\\
&u_{ij}u_{ik} = - u_{ik}u_{ij}, u_{ji}u_{ki} = -u_{ki}u_{ji}, &&k\neq j,\label{7.3}\\
&u_{ij}u_{kl}=u_{kl}u_{ij}, &&i\neq k, j \neq l.\label{7.4}
\end{align}
\end{defn}

For $n=3$, $SO_n^{-1}$ appeared in \cite{4points}, where Banica and Bichon showed $SO_3^{-1} = S_4^+$. Our main result in this paper is that for $n$ odd, $SO_n^{-1}$ is the quantum automorphism group of the folded $n$-cube graph. 

\begin{defn}
We define $SO_n^{-1}$ to be the compact matrix quantum \linebreak group $(C(SO_n^{-1}), u)$, where $C(SO_n^{-1})$ is the universal $C^*$-algebra with generators $u_{ij}$, $1 \leq i,j \leq n$, Relations \eqref{7.1} -- \eqref{7.4} and 
\begin{align}
\sum_{\sigma \in S_n} u_{\sigma(1)1}\dots u_{\sigma(n)n} =1.\label{7.5}
\end{align}
\end{defn}

\begin{lem}\label{sumzero}
Let $(u_{ij})_{1 \leq i,j \leq n}$ be the generators of $C(SO_n^{-1})$. Then 
\begin{align*}
\sum_{\sigma \in S_n} u_{\sigma(1)1} \dots u_{\sigma(n-1)n-1}u_{\sigma(n)k} =0
\end{align*}
for $k \neq n$. 
\end{lem}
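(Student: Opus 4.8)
The plan is to mimic the classical fact that an ``off‑diagonal'' permanent/determinant expansion of a matrix with orthonormal columns vanishes, but to do it carefully in the noncommutative setting of $C(SO_n^{-1})$, using Relations \eqref{7.1}--\eqref{7.5}. Fix $k \neq n$ and write
\[
S := \sum_{\sigma \in S_n} u_{\sigma(1)1} \dots u_{\sigma(n-1)\,n-1}\, u_{\sigma(n)k}.
\]
The key idea is to recognise that the word $u_{\sigma(1)1}\dots u_{\sigma(n-1)\,n-1}$ already involves all column indices $1,\dots,n-1$ exactly once, so that appending $u_{\sigma(n)k}$ for $k \le n-1$ creates a word in which the column $k$ appears twice, once as $u_{\sigma(k)k}$ and once as $u_{\sigma(n)k}$. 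I would first use Relations \eqref{7.4} to move the trailing factor $u_{\sigma(n)k}$ leftward past all factors $u_{\sigma(j)j}$ with $j \neq k$ and $\sigma(j) \neq \sigma(n)$ — these commute — until it sits immediately next to $u_{\sigma(k)k}$. (One must also handle the factor $u_{\sigma(j)j}$ with $\sigma(j) = \sigma(n)$, which would fall under \eqref{7.3} rather than \eqref{7.4}; but note that for a genuine permutation $\sigma$, the value $\sigma(n)$ is distinct from all $\sigma(j)$, $j \le n-1$, so this case does not arise.) Thus, up to sign bookkeeping from \eqref{7.3}, each term can be rewritten with a factor $u_{\sigma(k)k}\,u_{\sigma(n)k}$ (or $u_{\sigma(n)k}\,u_{\sigma(k)k}$) appearing contiguously.

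Next I would exploit the anticommutation relation \eqref{7.3}, $u_{ik}u_{jk} = -u_{jk}u_{ik}$ for $i \neq j$, in the columns. The standard trick is to pair up terms: consider the involution on $S_n$ that swaps the values $\sigma$ assigns to positions $k$ and $n$, i.e. $\sigma \mapsto \sigma' = \sigma \circ (k\,n)$. This is a fixed‑point‑free involution on $S_n$ (since $\sigma(k) \neq \sigma(n)$ always). For each such pair, after bringing both words into the normal form described above, the two words differ precisely by swapping the order of the two column‑$k$ factors $u_{\sigma(k)k}$ and $u_{\sigma(n)k}$, together with the rearrangement of the other factors; the crucial point is that the ``other factors'' are literally the same set of generators in the same relative order (because $\sigma$ and $\sigma'$ agree on all positions other than $k$ and $n$), and any reordering needed to line them up is governed by \eqref{7.3}/\eqref{7.4} and produces the same sign for both members of the pair. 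Hence the contributions of $\sigma$ and $\sigma'$ cancel in pairs, giving $S = 0$.

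The main obstacle I anticipate is the sign bookkeeping: one has to be scrupulous that, when transporting $u_{\sigma(n)k}$ leftward and when comparing the two paired words, the signs picked up from \eqref{7.3} are identical for $\sigma$ and $\sigma' = \sigma\circ(k\,n)$, so that the genuine cancellation comes from the single ``column‑$k$ swap'' $u_{\sigma(k)k}u_{\sigma(n)k} = -u_{\sigma(n)k}u_{\sigma(k)k}$. A clean way to organise this is to first establish a lemma putting an arbitrary product $u_{\pi(1)1}\cdots u_{\pi(n)n}u_{\pi(n)k}$ (with $\pi$ injective on the relevant indices) into a canonical ordered form with an explicit sign depending only on the relevant crossing pattern, and then observe that this sign is invariant under $\pi \mapsto \pi\circ(k\,n)$. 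Alternatively — and this may be the slickest route — one can avoid transporting factors at all: apply the *-homomorphism / universal property to reduce to checking the identity after pairing, or use that the $(n-1)\times(n-1)$ row‑deleted structure together with \eqref{7.2} forces column $k$ and column $n$ to behave symmetrically. I would try the direct pairing argument first and fall back on the canonical‑form lemma if the signs prove delicate.
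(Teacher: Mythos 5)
Your proposal is correct and is essentially the paper's own argument: pair $\sigma$ with $\sigma\circ(k\,n)$, move the two column-$k$ factors together using Relation \eqref{7.4} (which costs no sign, since $\sigma(n)\neq\sigma(j)$ for $j\le n-1$), and pick up the single $-1$ from Relation \eqref{7.3}, so the paired summands cancel. The sign bookkeeping you worry about is exactly this trivial observation, and none of the fallback routes you sketch is needed.
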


\begin{proof}
Let $1 \leq k \leq n-1$. Using Relations \eqref{7.3} and \eqref{7.4} we get
\begin{align*}
u_{\sigma(1)1} \dots u_{\sigma(k)k} \dots u_{\sigma(n-1)n-1}u_{\sigma(n)k} &= - u_{\sigma(1)1} \dots u_{\sigma(n)k} \dots  u_{\sigma(n-1)n-1}u_{\sigma(k)k}\\
&=-u_{\tau(1)1} \dots u_{\tau(k)k} \dots u_{\tau(n-1)n-1}u_{\tau(n)k}
\end{align*} 
for $\tau = \sigma \circ(k \, n) \in S_n$. Therefore the summands corresponding to $\sigma$ and $\tau$ sum up to zero. The result is then clear.
\end{proof}

The next lemma gives an equivalent formulation of Relation \eqref{7.5}. One direction is a special case of \cite[Lemma 4.6]{Dualpairs}.

\begin{lem}\label{SO} Let $A$ be a $C^*$-algebra and let $u_{ij} \in A$ be elements that fulfill Relations $\eqref{7.1}-\eqref{7.4}$. Let $j \in \{1,\dots,n\}$ and define
\begin{align*}
I_j = \{ (i_1,\dots ,i_{n-1}) \in \{1,\dots ,n\}^{n-1} \, | \, i_a \neq i_b \text{ for } a \neq b, i_s \neq j \text{ for all } s\}.
\end{align*}
The following are equivalent
\begin{itemize}
\item[(i)] We have \begin{align*}1&=\sum_{\sigma \in S_n} u_{\sigma(1)1}\dots u_{\sigma(n)n}.\intertext{
\item[(ii)] It holds} u_{jn} &= \sum_{(i_1,\dots,i_{n-1}) \in I_j} u_{i_1 1}\dots u_{i_{n-1}n-1}, \quad 1\leq j\leq n.
 \end{align*}
\end{itemize}
\end{lem}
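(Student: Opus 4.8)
The plan is to prove the two implications separately, using the multiplication rules \eqref{7.3} and \eqref{7.4} together with the orthogonality relations \eqref{7.2} as the computational engine. The key structural observation is that the products appearing in (i) and (ii) are "partial permanents" of $u$, and that Relations \eqref{7.3}--\eqref{7.4} mean that whenever two indices in such a product coincide in a row or in a column, transposing them introduces a sign; hence the nonzero contributions come precisely from products whose row-indices (for fixed distinct column-indices) are themselves distinct. This is exactly the phenomenon already exploited in Lemma \ref{sumzero}, so I would set things up to reuse that idea.

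\textbf{(i) $\Rightarrow$ (ii).} Fix $j$. The strategy is to multiply the identity $1=\sum_{\sigma\in S_n}u_{\sigma(1)1}\dots u_{\sigma(n)n}$ on the right — or rather to isolate the last factor. More precisely, I would start from Relation \eqref{7.2} in the form $\sum_{k}u_{kn}u_{kn}=1$ is not quite what is needed; instead use that $u_{jn}=u_{jn}\cdot 1 = u_{jn}\sum_{\sigma}u_{\sigma(1)1}\dots u_{\sigma(n)n}$ is the wrong move. The cleaner route: by \eqref{7.2} we have $\sum_{i=1}^n u_{in}u_{i n}=1$, but the genuinely useful consequence is $u_{jn}=\sum_{i}u_{jn}u_{in}u_{in}$? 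No — I will instead argue directly. Write $S=\sum_{\sigma\in S_n}u_{\sigma(1)1}\dots u_{\sigma(n)n}$ and group the sum over $\sigma$ according to the value $\sigma(n)=i\in\{1,\dots,n\}$: then $S=\sum_{i=1}^n\big(\sum_{(i_1,\dots,i_{n-1})\in I_i}u_{i_1 1}\dots u_{i_{n-1}n-1}\big)u_{in}$, where $I_i$ is as in the statement. Call the parenthesized sum $D_i$, so $S=\sum_i D_i u_{in}$ and (ii) is the assertion $D_j=u_{jn}$ for all $j$. Now multiply $S=1$ on the right by $u_{jn}$ and use \eqref{7.2}: $u_{jn}=u_{jn}\cdot 1$; better, multiply on the \emph{left}: the point is to compute $D_i\,u_{in}\,u_{jn}$. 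Using \eqref{7.2} ($\sum_i u_{in}u_{jn}=\delta_{ij}$ is a \emph{row} relation on the transpose, actually $\sum_k u_{ki}u_{kj}=\delta_{ij}$), one gets $\sum_i D_i u_{in}u_{jn}$; if one can show $D_i u_{in}u_{jn}=D_i\delta_{ij}$ this collapses to $D_j$, giving $1\cdot u_{jn}=D_j$ after right-multiplying $S=1$ by $u_{jn}$ — wait, that gives $D_j$ on one side and $u_{jn}$ on the other, which is exactly (ii). The genuine content is therefore the identity $D_i u_{in}u_{jn}=\delta_{ij}D_i$, which should follow because $D_i$ involves only columns $1,\dots,n-1$ and the letters $u_{i_s s}$, and one can commute/anticommute $u_{jn}$ past them using \eqref{7.3}, \eqref{7.4} — any factor $u_{i_s s}$ with $i_s\neq j$ anticommutes with $u_{jn}$ only if it shares a row, i.e.\ never (different rows, different columns $\Rightarrow$ commute by \eqref{7.4}), while $u_{i_s s}$ with $i_s=j$ appears for exactly those tuples in $I_i$ that contain $j$; sorting the resulting signs and reindexing is where Lemma \ref{sumzero}-type cancellation enters. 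I expect this bookkeeping to be the main obstacle.

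\textbf{(ii) $\Rightarrow$ (i).} This direction should be comparatively short: assuming $u_{jn}=D_j$ for all $j$, substitute into $\sum_{i=1}^n u_{in}u_{in}=1$ (Relation \eqref{7.2}) — or rather substitute $u_{jn}=D_j$ into one copy of $u_{jn}$ inside $\sum_i u_{in}u_{in}$, giving $1=\sum_i u_{in}D_i=\sum_i u_{in}\sum_{(i_1,\dots,i_{n-1})\in I_i}u_{i_1 1}\dots u_{i_{n-1}n-1}$. Using \eqref{7.3}--\eqref{7.4} to move $u_{in}$ to the right past the $n-1$ factors (it commutes with all of them since they live in columns $1,\dots,n-1\neq n$ and, crucially, in rows $\neq i$ because the tuple lies in $I_i$, so by \eqref{7.4} everything commutes), we obtain $1=\sum_i\sum_{(i_1,\dots,i_{n-1})\in I_i}u_{i_1 1}\dots u_{i_{n-1}n-1}u_{in}$. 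Finally observe that pairs $\big((i_1,\dots,i_{n-1}),i\big)$ with $i\notin\{i_1,\dots,i_{n-1}\}$ and the $i_s$ distinct are in bijection with permutations $\sigma\in S_n$ via $\sigma(s)=i_s$, $\sigma(n)=i$; hence the double sum is exactly $\sum_{\sigma\in S_n}u_{\sigma(1)1}\dots u_{\sigma(n)n}$, which is (i). The one point to be careful about is that $I_i$ forces distinctness, so the reindexing is a genuine bijection onto $S_n$ with no missing or repeated terms.
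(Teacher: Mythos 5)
Your (ii)$\Rightarrow$(i) direction is correct and is essentially the paper's argument: substitute the expression from (ii) into $\sum_j u_{jn}^2=1$ and reindex the resulting double sum as a sum over $S_n$. (The paper substitutes into the \emph{first} factor $u_{jn}$, which spares the step of commuting $u_{in}$ past $D_i$; your version of that step is nevertheless valid, since every factor of $D_i$ sits in a row $\neq i$ and a column $\neq n$, so \eqref{7.4} applies.)

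The (i)$\Rightarrow$(ii) direction has a genuine gap. You correctly reduce the problem to showing $\sum_i D_i u_{in}u_{jn}=D_j$, but the term-by-term identity $D_iu_{in}u_{jn}=\delta_{ij}D_i$ on which your argument rests is not justified by Relations \eqref{7.1}--\eqref{7.4}, and you should not expect it to hold individually: for $i=j$ it would require $D_ju_{jn}^2=D_j$, yet $u_{jn}^2\neq 1$ in general (only $\sum_k u_{jk}^2=1$ holds), and for $i\neq j$ Relation \eqref{7.3} gives only the anticommutation $u_{in}u_{jn}=-u_{jn}u_{in}$, not vanishing. Your closing remark that sorting signs via ``Lemma \ref{sumzero}-type cancellation'' will finish the job defers precisely the step that carries all the content of this direction, and the commutation bookkeeping you sketch (moving $u_{jn}$ past the factors of $D_i$) is not the mechanism that makes it work. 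What does work — and is the paper's route — is to start from $u_{jn}=\sum_{\sigma}u_{\sigma(1)1}\cdots u_{\sigma(n)n}u_{jn}$ and \emph{complete the column sum}: add, for each $k\neq n$, the quantity $\sum_{\sigma}u_{\sigma(1)1}\cdots u_{\sigma(n-1)n-1}u_{\sigma(n)k}u_{jk}$, which is zero by Lemma \ref{sumzero}; the inner sum over $k$ then becomes the full orthogonality relation $\sum_k u_{\sigma(n)k}u_{jk}=\delta_{\sigma(n)j}$ from \eqref{7.2}, which collapses the sum over $\sigma$ to exactly $\sum_{(i_1,\dots,i_{n-1})\in I_j}u_{i_11}\cdots u_{i_{n-1}n-1}$. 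Without this completion step your argument does not close.
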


\begin{proof}
We first show that (ii) implies (i). It holds
\begin{align*}
1 = \sum_{j=1}^n u_{jn}^2 = \sum_{j=1}^n \sum_{(i_1,\dots,i_{n-1}) \in I_j} u_{i_1 1}\dots u_{i_{n-1}n-1}u_{jn}, 
\end{align*}
where we used Relation \eqref{7.2} and (ii). Furthermore, we have 
\begin{align*}
\sum_{j=1}^n \sum_{(i_1,\dots,i_{n-1}) \in I_j} u_{i_1 1}\dots u_{i_{n-1}n-1} u_{jn}
&= \sum_{\substack{i_1, \dots, i_n;\\ i_a \neq i_b \text{ for } a\neq b}} u_{i_1 1} \dots u_{i_n n}\\
&= \sum_{\sigma \in S_n} u_{\sigma(1)1} \dots u_{\sigma(n)n}
\end{align*}
and thus (ii) implies (i). 

Now we show that (i) implies (ii). We have 
\begin{align*}
u_{jn} = \sum_{\sigma \in S_n} u_{\sigma(1)1} \dots u_{\sigma(n)n}u_{jn} = \sum_{k=1}^n \sum_{\sigma \in S_n} u_{\sigma(1)1} \dots u_{\sigma(n-1)n-1}u_{\sigma(n)k}u_{jk}, 
\end{align*}
since $\sum_{\sigma \in S_n} u_{\sigma(1)1} \dots u_{\sigma(n-1)n-1}u_{\sigma(n)k}u_{jk} =0$ for $k \neq n$ by Lemma \ref{sumzero}. We get
\begin{align*}
 \sum_{k=1}^n \sum_{\sigma \in S_n} u_{\sigma(1)1} \dots u_{\sigma(n-1)n-1} u_{\sigma(n)k}u_{jk} &= \sum_{\sigma \in S_n} u_{\sigma(1)1} \dots u_{\sigma(n-1)n-1}\sum_{k=1}^n u_{\sigma(n)k}u_{jk}\\
&= \sum_{\sigma \in S_n} u_{\sigma(1)1}\dots u_{\sigma(n-1)n-1}\delta_{\sigma(n)j}\\
&= \sum_{(i_1,\dots,i_{n-1}) \in I_j} u_{i_1 1}\dots u_{i_{n-1}n-1}, 
\end{align*}
where we used Relation \eqref{7.2} and we obtain $u_{jn} = \sum_{(i_1,\dots,i_{n-1}) \in I_j} u_{i_1 1}\dots u_{i_{n-1}n-1}$. 
\end{proof}

We now discuss representations of $SO_{2m+1}^{-1}$. For definitions and background for this proposition, we refer to \cite{hopfgalois, subgroups, schauenburg}.

\begin{prop}
The category of corepresentations of $SO_{2m+1}^{-1}$ is tensor equivalent to the category of representations of $SO_{2m+1}$. 
\end{prop}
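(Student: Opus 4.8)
The plan is to identify $C(SO_{2m+1}^{-1})$ as a cocycle twist of the commutative Hopf algebra $\mathcal{O}(SO_{2m+1})$ by a $2$-cocycle on the group $\Z_2^{2m+1}$ (or rather on a suitable subgroup), and then invoke the fact that twisting by a cocycle does not change the tensor category of corepresentations up to equivalence — only the associativity/monoidal structure of the fiber functor changes, and for an ordinary (not braided) tensor equivalence this is irrelevant. Concretely, first I would recall from \cite{hyperoctahedral} that $C(O_n^{-1})$ is the twist of $\mathcal{O}(O_n)$ by the Doplicher--Roberts / Klein-type $2$-cocycle $\sigma$ associated to the diagonal subgroup $\Z_2^n \subset O_n$ that flips signs of coordinates; the sign relations \eqref{7.3}--\eqref{7.4} are exactly what this cocycle produces on the coordinate generators. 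I would then check that the extra relation \eqref{7.5} defining $SO_{2m+1}^{-1}$ is the twist of the single relation $\det = 1$ cutting $SO_{2m+1}$ out of $O_{2m+1}$; Lemma~\ref{SO} is the technical input here, since it rewrites \eqref{7.5} in the "cofactor" form $u_{jn}=\sum_{I_j} u_{i_11}\cdots u_{i_{n-1}\,n-1}$ that is manifestly the $\sigma$-twist of the classical cofactor expansion of the inverse in terms of the determinant. Hence $C(SO_{2m+1}^{-1}) \cong \mathcal{O}(SO_{2m+1})^\sigma$ as Hopf $*$-algebras.

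Next I would invoke the general principle (due to Schauenburg, and exploited in the references \cite{hopfgalois, subgroups, schauenburg}) that if $H$ is a Hopf algebra and $\sigma$ is a $2$-cocycle on $H$, then the category $\mathrm{Corep}(H^\sigma)$ of comodules over the twisted Hopf algebra is \emph{equivalent as a tensor category} to $\mathrm{Corep}(H)$: the underlying abelian category is literally unchanged (comodules over $H^\sigma$ are the same vector spaces with the same coactions), and the tensor product of comodules is the same on objects, with only the associativity constraint being modified by $\sigma$; but a monoidal category with a modified associator is still monoidally equivalent to the original via the identity functor equipped with the natural isomorphism built from $\sigma$. Applying this with $H = \mathcal{O}(SO_{2m+1})$ gives $\mathrm{Corep}(SO_{2m+1}^{-1}) \simeq \mathrm{Corep}(SO_{2m+1}) = \mathrm{Rep}(SO_{2m+1})$ as tensor categories, which is the assertion. (One should note that it is crucial we work with the algebraic/Hopf-algebra level — the dense Hopf $*$-subalgebra of regular functions — rather than directly with the $C^*$-algebras, and that odd $n=2m+1$ is what makes the determinant condition behave well and is inherited from the hypercube picture.)

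The main obstacle, and the step I would spend the most care on, is verifying the cocycle-twist identification $C(SO_{2m+1}^{-1}) \cong \mathcal{O}(SO_{2m+1})^\sigma$ rigorously — in particular confirming that the universal $C^*$-algebra presented by \eqref{7.1}--\eqref{7.5} has the "expected size", i.e. that the twisting cocycle does not accidentally collapse the algebra and that the twisted determinant relation is equivalent (via Lemma~\ref{SO}) to $1 = \sum_{\sigma\in S_n} u_{\sigma(1)1}\cdots u_{\sigma(n)n}$ and not something weaker. The matching of generators and relations between the two presentations is essentially bookkeeping with the sign cocycle, but it must be done in the right order: first establish the twist isomorphism for $O_n^{-1}$ (cite \cite{hyperoctahedral}), then show the ideal generated by the twisted $\det=1$ relation corresponds under the twist to the ideal generated by the classical $\det = 1$ relation, using that on a commutative polynomial algebra the cofactor expansion and the determinant generate the same ideal over the orthogonal group. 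Once that algebraic isomorphism is in hand, the tensor equivalence is a formal consequence of Schauenburg's theorem and requires no further computation.
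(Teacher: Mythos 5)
Your proposal follows essentially the same route as the paper: identify $C(SO_{2m+1}^{-1})$ as a $2$-cocycle twist of $C(SO_{2m+1})$ via a bicharacter on the diagonal subgroup $\Z_2^{2m}\subset SO_{2m+1}$ (the paper writes this subgroup as $\langle t_1,\dots,t_{2m+1}\,|\,t_i^2=1,\ t_it_j=t_jt_i,\ t_{2m+1}=t_1\cdots t_{2m}\rangle$, checks the relations \eqref{7.1}--\eqref{7.5} directly against the twisted generators, and then applies the Hopf--Galois/Schauenburg results to conclude the tensor equivalence). One minor inaccuracy in your description: for a genuine $2$-cocycle the associativity constraint of the comodule category is \emph{not} modified --- the equivalence is the identity functor equipped with a nontrivial monoidal structure built from $\sigma$ --- but this does not affect the validity of your argument, since the theorem you invoke is the correct one.
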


\begin{proof}
We first show that $C(SO_{2m+1}^{-1})$ is a cocycle twist of $C(SO_{2m+1})$ by proceeding like in \cite[Section 4]{subgroups}. Take the unique bicharacter $\sigma: \Z_2^{2m} \times \Z_2^{2m} \to \{\pm 1\}$ with
\begin{align*}
&\sigma(t_i, t_j) = - 1 = - \sigma(t_j, t_i) ,&& \text{for } 1 \leq i < j \leq 2m,\\
&\sigma(t_i, t_i) = (-1)^m, && \text{for } 1 \leq i \leq 2m+1,\\
&\sigma(t_i, t_{2m+1}) = (-1)^{m-i} = - \sigma(t_{2m+1}, t_i), && \text{for } 1 \leq i \leq 2m,
\end{align*}
where we use the identification $\Z_2^{2m} = \langle t_1, \dots, t_{2m+1} \, | \, t_i^2=1, t_i t_j = t_j t_i, t_{2m+1} = t_1\dots t_{2m}\rangle$.
Let $H$ be the subgroup of diagonal matrices in $SO_{2m+1}$ having $\pm 1$ entries. We get a surjective *-homomorphism 
\begin{align*}
\pi : C(SO_{2m+1}) &\to C^*(\Z_2^{2m})\\
u_{ij} &\mapsto \delta_{ij} t_i
\end{align*}
by restricting the functions on $SO_{2m+1}$ to $H$ and using Fourier transform. Thus we can form the twisted algebra $C(SO_{2m+1})^{\sigma}$, 
where we have the multiplication
\begin{align*}
[u_{ij}] [u_{kl}] = \sigma(t_i, t_k) \sigma^{-1}(t_j, t_l) [u_{ij}u_{kl}] =\sigma(t_i, t_k) \sigma(t_j, t_l) [u_{ij}u_{kl}].
\end{align*}
We see that the generators $[u_{ij}]$ of $C(SO_{2m+1})^{\sigma}$ fulfill the same relations as the generators of $C(SO_{2m+1}^{-1})$ and thus we get an surjective *-homomorphism \linebreak$\phi: C(SO_{2m+1}^{-1}) \to C(SO_{2m+1})^{\sigma}, \,u_{ij} \mapsto [u_{ij}].$ This is an isomorphism for example by using Theorem 3.5 of \cite{Kassel}.
 Now, Corollary 1.4 and Proposition 2.1 of \cite{hopfgalois} yield the assertion.
\end{proof}

\section{Quantum automorphism groups of folded cube graphs}\label{sect}

In what follows, we will introduce folded cube graphs $FQ_n$ and show that for odd $n$, the quantum automorphism group of $FQ_n$ is $SO_n^{-1}$.

\subsection{The folded $n$-cube graph $FQ_n$}

\begin{defn}
The folded $n$-cube graph $FQ_n$ is the graph with vertex set $V= \{ (x_1, \dots, x_{n-1})  \, | \, x_i \in \{0,1\}\}$, where two vertices $(x_1,\dots, x_{n-1})$ and $ (y_1, \dots,y_{n-1})$ are connected if they differ at exactly one position or if $(y_1, \dots,y_{n-1}) = (1-x_1, \dots, 1-x_{n-1})$. 
\end{defn}

\begin{rem}
To justify the name, one can obtain the folded $n$-cube graph by identifing every opposite pair of vertices from the $n$-hypercube graph. 
\end{rem}

\subsection{The folded cube graph as Cayley graph}
It is known that the folded cube graphs are Cayley graphs, we recall this fact in the next lemma. 

\begin{lem}
The folded $n$-cube graph $FQ_n$ is the Cayley graph of the group \linebreak$\Z_2^{n-1}= \langle t_1, \dots t_n \rangle$, where the generators $t_i$ fulfill the relations $t_i^2=1, t_i t_j = t_j t_i, t_n = t_1\dots t_{n-1}$. 
\end{lem}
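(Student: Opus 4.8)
The plan is to exhibit an explicit isomorphism between $FQ_n$ and the Cayley graph $\mathrm{Cay}(\Z_2^{n-1}, S)$ with connection set $S = \{t_1, \dots, t_{n-1}, t_n\}$, where $t_n := t_1 \cdots t_{n-1}$. First I would identify the vertex set $V = \{(x_1,\dots,x_{n-1}) \mid x_i \in \{0,1\}\}$ of $FQ_n$ with the underlying set of $\Z_2^{n-1}$ via the map $(x_1,\dots,x_{n-1}) \mapsto t_1^{x_1}\cdots t_{n-1}^{x_{n-1}}$. This is a bijection precisely because $t_1,\dots,t_{n-1}$ freely generate $\Z_2^{n-1}$ (the relation $t_n = t_1\cdots t_{n-1}$ only expresses the redundant generator $t_n$ in terms of the others and imposes no further constraint).

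Next I would check that $S$ is a legitimate connection set for a simple undirected Cayley graph. Since every $t_i$, and in particular $t_n$, is an involution, we have $S = S^{-1}$, so $\mathrm{Cay}(\Z_2^{n-1}, S)$ is undirected; and $S$ does not contain the neutral element, because each $t_i$ with $i \leq n-1$ is a nontrivial basis element and $t_n$ corresponds under the above identification to $(1,\dots,1)\neq 0$. Hence the Cayley graph has no loops, consistent with $FQ_n$.

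The core of the argument is then a direct unwinding of definitions. Two vertices $g = t_1^{x_1}\cdots t_{n-1}^{x_{n-1}}$ and $h = t_1^{y_1}\cdots t_{n-1}^{y_{n-1}}$ are adjacent in $\mathrm{Cay}(\Z_2^{n-1}, S)$ if and only if $g^{-1}h \in S$; since $g^{-1} = g$, this is equivalent to $gh \in S$. Under the identification, $gh$ corresponds to $(x_1 + y_1, \dots, x_{n-1} + y_{n-1})$ with addition modulo $2$. Thus $gh = t_i$ for some $i \leq n-1$ exactly when $x$ and $y$ differ in precisely the $i$-th coordinate, i.e. at exactly one position; and $gh = t_n = t_1\cdots t_{n-1}$ exactly when $(x_1+y_1,\dots,x_{n-1}+y_{n-1}) = (1,\dots,1)$, i.e. $(y_1,\dots,y_{n-1}) = (1-x_1,\dots,1-x_{n-1})$. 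These are exactly the two adjacency conditions in the definition of $FQ_n$, so the bijection of vertex sets is a graph isomorphism.

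I do not expect any genuine obstacle here: the statement is a routine reformulation. The only points deserving a line of care are verifying that $S$ is symmetric and identity-free (so that $\mathrm{Cay}(\Z_2^{n-1},S)$ really is a simple graph in the same sense as $FQ_n$) and, implicitly, noting that the degenerate case $n=2$ — where $t_2 = t_1$ and $FQ_2 = K_2$ — is still correctly recovered by the same description.
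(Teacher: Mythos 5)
Your proof is correct and follows essentially the same route as the paper: identify each vertex $t_1^{x_1}\cdots t_{n-1}^{x_{n-1}}$ with its exponent vector, observe that multiplication by $t_k$ for $k\le n-1$ flips exactly the $k$-th coordinate while multiplication by $t_n=t_1\cdots t_{n-1}$ flips all of them, and match these with the two adjacency conditions of $FQ_n$. Your extra remarks (symmetry of the connection set, absence of the identity, the degenerate case $n=2$) are fine but not needed beyond what the paper records.
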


\begin{proof}
Consider the Cayley graph of $\Z_2^{n-1}= \langle t_1, \dots t_n \rangle$. The vertices are elements of $\Z_2^{n-1}$, which are products of the form $g = t_1^{i_1}\dots t_{n-1}^{i_{n-1}}$. The exponents are in one to one correspondence to $(x_1, \dots, x_{n-1}), \, x_i \in \{0,1\}$, thus the vertices of the Cayley graph are the vertices of the folded $n$-cube graph. The edges of the Cayley graph are drawn between vertices $g,h$, where $g = h t_i$ for some $i$. For $k \in \{1,\dots, n-1\}$, the operation $h \to h t_k$ changes the $k$-th exponent to $1-i_k$, so we get edges between vertices that differ at exactly one expontent. The operation $h \to h t_n$ takes $t_1^{j_1}\dots t_{n-1}^{j_{n-1}}$ to $t_1^{1-j_1}\dots t_{n-1}^{1-j_{n-1}}$, thus we get the remaining edges of $FQ_n$. 
\end{proof}

\subsection{Eigenvalues and Eigenvectors of $FQ_n$}
We will now discuss the eigenvalues and eigenvectors of the adjacency matrix of $FQ_n$. 

\begin{lem}\label{EV}
The eigenvectors and corresponding eigenvalues of $FQ_n$ are given by 
\begin{align*}
w_{i_1\dots i_{n-1}}&= \sum_{j_1,\dots,j_{n-1}=0}^1(-1)^{i_1j_1 + \dots +i_{n-1}j_{n-1}}e_{t_1^{j_1}\dots t_{n-1}^{j_{n-1}}}\\
\lambda_{i_1\dots i_{n-1}} &= (-1)^{i_1}+ \ldots +(-1)^{i_{n-1}}+(-1)^{i_1+\ldots+i_{n-1}},
\end{align*}
when the vector space spanned by the vertices of $FQ_n$ is identified with $C(\Z_2^{n-1})$. 
\end{lem}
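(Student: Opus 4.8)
The plan is to recognise the adjacency operator of $FQ_n$ as a ``convolution'' operator on the group algebra of $\Z_2^{n-1}$ and to diagonalise it using characters, that is, using exactly the vectors $w_{i_1\dots i_{n-1}}$, which are the images of the group elements under the inverse Fourier transform of Section~\ref{Fourier}. By the previous lemma, $FQ_n$ is the Cayley graph of $\Z_2^{n-1} = \langle t_1,\dots,t_n\rangle$ with connection set $S = \{t_1,\dots,t_{n-1},t_n\}$, where $t_n = t_1\cdots t_{n-1}$; for $n\ge 3$ these are $n$ distinct involutions and $S = S^{-1}$. Identifying the span of the vertex set with $C(\Z_2^{n-1})$ as in the statement, the adjacency matrix acts by $\varepsilon e_g = \sum_{s\in S} e_{gs}$, since $g$ and $h$ are adjacent exactly when $h = gs$ for some $s \in S$ (differing in the $k$-th position corresponds to $s=t_k$, being antipodal corresponds to $s=t_n$).

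The main computation is then to apply $\varepsilon$ to $w = w_{i_1\dots i_{n-1}}$ term by term. For a generator $t_k$ with $1\le k\le n-1$, right multiplication by $t_k$ flips the $k$-th exponent of every element of $\Z_2^{n-1}$, so reindexing the sum defining $w$ gives
\[
\sum_{j_1,\dots,j_{n-1}=0}^1 (-1)^{i_1 j_1+\dots+i_{n-1}j_{n-1}}\, e_{t_1^{j_1}\dots t_{n-1}^{j_{n-1}}t_k} \;=\; (-1)^{i_k}\, w.
\]
Similarly, right multiplication by $t_n = t_1\cdots t_{n-1}$ flips all $n-1$ exponents at once, producing the factor $(-1)^{i_1+\dots+i_{n-1}}$. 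Adding the $n$ contributions gives $\varepsilon w = \lambda_{i_1\dots i_{n-1}}\, w$ with $\lambda_{i_1\dots i_{n-1}} = (-1)^{i_1}+\dots+(-1)^{i_{n-1}}+(-1)^{i_1+\dots+i_{n-1}}$, exactly as claimed.

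It remains to see that the $w_{i_1\dots i_{n-1}}$, with $(i_1,\dots,i_{n-1})$ ranging over $\{0,1\}^{n-1}$, form a basis of $C(\Z_2^{n-1})$; then we have found all eigenvectors and all eigenvalues. This is immediate: up to the scalar $2^{n-1}$, $w_{i_1\dots i_{n-1}}$ is the image of $t_1^{i_1}\dots t_{n-1}^{i_{n-1}} \in C^*(\Z_2^{n-1})$ under the inverse Fourier transform $\psi$ of Section~\ref{Fourier}, which is an isomorphism, and the elements $t_1^{i_1}\dots t_{n-1}^{i_{n-1}}$ form a basis of $C^*(\Z_2^{n-1})$. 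The only points that require care are the reindexing in the displayed identity and checking that the given identification of the vertex space with $C(\Z_2^{n-1})$ is compatible with the right-multiplication description of $\varepsilon$; beyond this routine bookkeeping there is no real obstacle.
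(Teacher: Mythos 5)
Your proof is correct and follows essentially the same route as the paper: apply the adjacency operator of the Cayley graph to the character vectors $w_{i_1\dots i_{n-1}}$, reindex (the paper's shift $j_s' = j_s+1 \bmod 2$ is exactly your ``flipping the $k$-th exponent''), and conclude by counting that the $2^{n-1}$ linearly independent vectors exhaust the space. The only microscopic slip is that $w_{i_1\dots i_{n-1}}$ equals $\psi(t_1^{i_1}\cdots t_{n-1}^{i_{n-1}})$ exactly, with no factor of $2^{n-1}$ (that normalisation sits in $\phi$, not $\psi$), which does not affect the linear-independence argument.
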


\begin{proof}
Let $\varepsilon$ be the adjacency matrix of $FQ_n$. Then we know for a vertex $p$ and a function $f$ on the vertices that
\begin{align*}
\varepsilon f(p) = \sum_{q ; (q,p) \in E} f(q).
\end{align*}
This yields
\begin{align*}
\varepsilon  e_{t_1^{j_1}\dots t_{n-1}^{j_{n-1}}} = \sum_{k=1}^n e_{t_kt_1^{j_1}\dots t_{n-1}^{j_{n-1}}} = e_{t_1^{j_1 +1}\dots t_{n-1}^{j_{n-1}}} + \dots+ e_{t_1^{j_1}\dots t_{n-1}^{j_{n-1}+1}} + e_{t_1^{j_1+1}\dots t_{n-1}^{j_{n-1}+1}}.
\end{align*}
For the vectors in the statement we get\allowdisplaybreaks
\begin{align*}
\varepsilon w_{i_1 \dots i_{n-1}} &= \sum_{j_1,\dots,j_{n-1}} (-1)^{i_1j_1 + \dots +i_{n-1}j_{n-1}}\varepsilon e_{t_1^{j_1}\dots t_{n-1}^{j_{n-1}}}\\
&=\sum_{s=1}^{n-1} \sum_{j_1,\dots,j_{n-1}} (-1)^{i_1j_1 + \dots +i_{n-1}j_{n-1}}e_{t_1^{j_1}\dots t_s^{j_s+1}\dots t_{n-1}^{j_{n-1}}}\\
&\qquad + \sum_{j_1,\dots,j_{n-1}} (-1)^{i_1j_1 + \dots +i_{n-1}j_{n-1}}e_{t_1^{j_1+1}\dots t_{n-1}^{j_{n-1}+1}}.
\intertext{Using the index shift $j_s' = j_s +1\bmod 2$, for $s \in \{1, \dots n-1\}$, we get}
\varepsilon w_{i_1 \dots i_{n-1}}&=\sum_{s=1}^{n-1} \sum_{j_1,\dots j_s', \dots j_{n-1}} (-1)^{i_1j_1 + \dots + i_s(j_s'+1)+ \dots +i_{n-1}j_{n-1}}e_{t_1^{j_1}\dots t_s^{j_s'}\dots t_{n-1}^{j_{n-1}}}\\
&\qquad + \sum_{j_1',\dots,j_{n-1}'} (-1)^{i_1(j_1'+1) + \dots +i_{n-1}(j_{n-1}'+1)}e_{t_1^{j_1'}\dots t_{n-1}^{j_{n-1}'}}\\
&=\sum_{s=1}^{n-1} \sum_{j_1,\dots, j_s', \dots, j_{n-1}}(-1)^{i_s}(-1)^{i_1j_1 + \dots +i_{n-1}j_{n-1}}e_{t_1^{j_1}\dots t_s^{j_s'}\dots t_{n-1}^{j_{n-1}}} \\
&\qquad + \sum_{j_1',\dots,j_{n-1}'} (-1)^{i_1+ \dots + i_{n-1}}(-1)^{i_1j_1' + \dots +i_{n-1}j_{n-1}'}e_{t_1^{j_1'}\dots t_{n-1}^{j_{n-1}'}}\\
&= ((-1)^{i_1}+ \ldots +(-1)^{i_{n-1}}+(-1)^{i_1+\ldots+i_{n-1}})w_{i_1\dots i_{n-1}}\\
&= \lambda_{i_1\dots i_{n-1}}w_{i_1\dots i_{n-1}}.
\end{align*}
Since those are $2^{n-1}$ vectors that are linearly independent, the assertion follows. 
\end{proof}

The following lemma shows what the eigenvectors look like if we identify the vector space spanned by the vertices of $FQ_n$ with $C^*(\Z_2^{n-1})$.

\begin{lem}\label{ev}
In $C^*(\Z_2^{n-1})= C^*(t_1, \dots, t_n \, | \, t_i^2=1, t_i t_j = t_j t_i, t_n = t_1\dots t_{n-1})$ the eigenvectors of $FQ_n$ are 
\begin{align*}
\hat{w}_{i_1\dots i_{n-1}} = t_1^{i_1} \dots t_{n-1}^{i_{n-1}}
\end{align*}
corresponding to the eigenvalues $\lambda_{i_1\dots i_{n-1}}$ from Lemma \ref{EV}.
\end{lem}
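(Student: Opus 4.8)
The plan is to transport the eigenvector computation of Lemma \ref{EV} across the Fourier transform $\psi: C^*(\Z_2^{n-1}) \to C(\Z_2^{n-1})$ from the proposition in Section \ref{Fourier}. Recall that under $\psi$ we have
\begin{align*}
\psi(t_1^{i_1}\dots t_{n-1}^{i_{n-1}}) = \sum_{j_1,\dots,j_{n-1}=0}^1 (-1)^{i_1 j_1 + \dots + i_{n-1}j_{n-1}} e_{t_1^{j_1}\dots t_{n-1}^{j_{n-1}}} = w_{i_1\dots i_{n-1}},
\end{align*}
so $\psi$ maps the proposed eigenvector $\hat w_{i_1\dots i_{n-1}} = t_1^{i_1}\dots t_{n-1}^{i_{n-1}}$ to exactly the eigenvector $w_{i_1\dots i_{n-1}}$ of Lemma \ref{EV}. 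Since the adjacency matrix $\varepsilon$ of $FQ_n$ is defined intrinsically on the vertex set $\Z_2^{n-1}$ (independently of whether we view functions on it as $C(\Z_2^{n-1})$ or, via the isomorphism $\psi^{-1} = \phi$, as $C^*(\Z_2^{n-1})$), the operator "multiplication/convolution by the generating set" acting on $C^*(\Z_2^{n-1})$ is conjugate by $\psi$ to the action of $\varepsilon$ on $C(\Z_2^{n-1})$. Hence $\hat w_{i_1\dots i_{n-1}}$ is an eigenvector with the same eigenvalue $\lambda_{i_1\dots i_{n-1}}$.

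Concretely, I would carry this out in two steps. First, identify the action of $\varepsilon$ on $C^*(\Z_2^{n-1})$ directly: under the identification of the vertex $t_1^{j_1}\dots t_{n-1}^{j_{n-1}}$ with the group element itself, the adjacency operator sends a group element $g$ to $\sum_{k=1}^{n} t_k g$, exactly as in the proof of Lemma \ref{EV} (the generator $t_n = t_1\dots t_{n-1}$ accounts for the "antipodal" edge). Second, compute
\begin{align*}
\varepsilon\, \hat w_{i_1\dots i_{n-1}} = \sum_{k=1}^{n} t_k\, t_1^{i_1}\dots t_{n-1}^{i_{n-1}} = \sum_{s=1}^{n-1} t_1^{i_1}\dots t_s^{i_s+1}\dots t_{n-1}^{i_{n-1}} + t_1^{i_1+1}\dots t_{n-1}^{i_{n-1}+1},
\end{align*}
and observe that this does not obviously look like a scalar multiple of $\hat w_{i_1\dots i_{n-1}}$ — this is the only point requiring care, and it is resolved precisely because the $t_s$ are group elements of order $2$: one cannot pull out a scalar as one could in $C(\Z_2^{n-1})$. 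The clean way is therefore not to redo the computation but simply to invoke that $\psi$ intertwines the two realizations of $\varepsilon$, so that $\varepsilon \hat w_{i_1\dots i_{n-1}} = \varepsilon \psi^{-1}(w_{i_1\dots i_{n-1}}) = \psi^{-1}(\varepsilon w_{i_1\dots i_{n-1}}) = \psi^{-1}(\lambda_{i_1\dots i_{n-1}} w_{i_1\dots i_{n-1}}) = \lambda_{i_1\dots i_{n-1}} \hat w_{i_1\dots i_{n-1}}$, citing Lemma \ref{EV}. Finally, since the $2^{n-1}$ elements $t_1^{i_1}\dots t_{n-1}^{i_{n-1}}$ form a basis of $C^*(\Z_2^{n-1})$, we have found all eigenvectors, which completes the proof.

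The main (and essentially only) obstacle is conceptual rather than computational: one must be careful about what "the adjacency matrix acts on $C^*(\Z_2^{n-1})$" means, i.e. that the graph structure is carried along the isomorphism $\psi$ so that the eigenvalue is genuinely preserved. Once that identification is made explicit, the statement is immediate from Lemma \ref{EV}, and no new calculation beyond what appears there is needed.
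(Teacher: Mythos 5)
Your proof is correct and is essentially the paper's own: the paper's entire proof of Lemma \ref{ev} is the single observation that $\hat w_{i_1\dots i_{n-1}}$ is the Fourier transform of $w_{i_1\dots i_{n-1}}$, i.e.\ exactly your final chain $\varepsilon\,\psi^{-1}(w_{i_1\dots i_{n-1}})=\psi^{-1}(\varepsilon w_{i_1\dots i_{n-1}})=\lambda_{i_1\dots i_{n-1}}\hat w_{i_1\dots i_{n-1}}$, where the adjacency operator on $C^*(\Z_2^{n-1})$ is \emph{defined} as the conjugate $\psi^{-1}\circ\varepsilon\circ\psi$. One caution about your ``concrete first step'': the formula $g\mapsto\sum_{k=1}^n t_k g$ describes $\varepsilon$ under the \emph{trivial} identification $e_g\leftrightarrow g$ of $C(V)$ with the group algebra, not under the Fourier identification used in the lemma; with respect to that trivial identification the eigenvectors would be $\sum_{j_1,\dots,j_{n-1}}(-1)^{i_1j_1+\dots+i_{n-1}j_{n-1}}t_1^{j_1}\cdots t_{n-1}^{j_{n-1}}$ rather than $t_1^{i_1}\cdots t_{n-1}^{i_{n-1}}$. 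You correctly notice that $\sum_k t_k\, t_1^{i_1}\cdots t_{n-1}^{i_{n-1}}$ is not a scalar multiple of $t_1^{i_1}\cdots t_{n-1}^{i_{n-1}}$ and discard that computation, but the reason is not an inability to ``pull out a scalar''---it is that this is simply a different operator from $\psi^{-1}\circ\varepsilon\circ\psi$. Since you ultimately rely only on the conjugation argument, the proof stands; just delete or correct the intermediate description before writing it up.
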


\begin{proof}
We obtain $\hat{w}_{i_1\dots i_{n-1}} $ by using the Fourier transform (see Section \ref{Fourier}) on $w_{i_1\dots i_{n-1}}$ from Lemma \ref{EV}. 
\end{proof}

Note that certain eigenvalues in Lemma \ref{EV} coincide. We get a better description of the eigenvalues and eigenspaces of $FQ_n$ in the next lemma. 

\begin{lem}\label{ES}
The eigenvalues of $FQ_n$ are given by $\lambda_k = n-2k$ for $k\in 2\Z\cap \{0,\dots,n\}$. The eigenvectors $t_1^{i_1} \dots t_{n-1}^{i_{n-1}}$ corresponding to $\lambda_k$ have word lengths $k$ or $k-1$ and form a basis of $E_{\lambda_k}$. Here $E_{\lambda_k}$ denotes the eigenspace to the eigenvalue $\lambda_k$.
\end{lem}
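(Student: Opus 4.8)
The starting point is the explicit formula for the eigenvalues from Lemma \ref{EV}: for a multi-index $(i_1,\dots,i_{n-1})\in\{0,1\}^{n-1}$,
$$\lambda_{i_1\dots i_{n-1}} = (-1)^{i_1}+\dots+(-1)^{i_{n-1}}+(-1)^{i_1+\dots+i_{n-1}}.$$
The plan is to parametrize by the \emph{word length} $w := i_1+\dots+i_{n-1}$, i.e.\ the number of indices equal to $1$. Then $(-1)^{i_1}+\dots+(-1)^{i_{n-1}} = (n-1-w) - w = n-1-2w$, so $\lambda_{i_1\dots i_{n-1}} = n-1-2w+(-1)^w$. First I would split into the parity of $w$: if $w$ is even, $\lambda = n-1-2w+1 = n-2w$; if $w$ is odd, $\lambda = n-1-2w-1 = n-2(w+1)$. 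In both cases the eigenvalue has the form $n-2k$ with $k$ even: when $w$ is even, $k=w$; when $w$ is odd, $k=w+1$. As $w$ ranges over $\{0,1,\dots,n-1\}$, the value $k$ ranges over the even numbers in $\{0,\dots,n\}$ (note $w=n-1$ gives $k=n-1$ if $n-1$ even, or $k=n$ if $n-1$ odd — so the endpoint $n$ is hit exactly when $n$ is odd, which is consistent with the range $2\Z\cap\{0,\dots,n\}$). This establishes that the eigenvalues are exactly $\lambda_k = n-2k$ for $k\in 2\Z\cap\{0,\dots,n\}$.

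Next I would identify which basis vectors $\hat w_{i_1\dots i_{n-1}} = t_1^{i_1}\dots t_{n-1}^{i_{n-1}}$ lie in $E_{\lambda_k}$. From the computation above, $\lambda_{i_1\dots i_{n-1}} = \lambda_k$ precisely when either $w = k$ (and $k$ even — automatic) or $w = k-1$ (so $w$ odd, again automatic since $k$ even). Hence the eigenvectors corresponding to $\lambda_k$ are exactly the monomials $t_1^{i_1}\dots t_{n-1}^{i_{n-1}}$ with word length $k$ or $k-1$, as claimed. Since by Lemma \ref{ev} these monomials, over all multi-indices in $\{0,1\}^{n-1}$, form a basis of $C^*(\Z_2^{n-1})$ consisting of eigenvectors of $\varepsilon$, the subcollection with word length $k$ or $k-1$ is a linearly independent set lying in $E_{\lambda_k}$; and since these subcollections over distinct admissible $k$ partition the whole basis, each one must span the corresponding eigenspace. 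This gives the basis claim.

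There is really no serious obstacle here; the only thing requiring care is the bookkeeping at the boundary $k=n$, making sure the parity conditions on $n$ are handled correctly (for $n$ even the word length $w=n-1$ is odd, contributing to $E_{\lambda_{n}}$ with $\lambda_n = n-2n = -n$ — but wait, one should double-check whether $k=n$ actually occurs for even $n$: $w=n-1$ odd gives $k=w+1=n$, so yes it does, and $n\in 2\Z$, consistent). The cleanest writeup is to do the two-line case analysis on the parity of $w$, record the resulting $(k,w)$ correspondence in a single display, and then invoke Lemma \ref{ev} for the basis statement. I would also remark that distinct values of $k$ in $2\Z\cap\{0,\dots,n\}$ give distinct eigenvalues $n-2k$, so the eigenspaces are genuinely indexed by these $k$ and the decomposition is a direct sum.
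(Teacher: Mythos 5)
Your proposal is correct and follows essentially the same route as the paper: both compute $\lambda_{i_1\dots i_{n-1}}$ from Lemma \ref{EV} in terms of the word length, split on its parity to get $n-2k$ with $k$ even, and then deduce the basis claim from the fact that the monomials $t_1^{i_1}\dots t_{n-1}^{i_{n-1}}$ of Lemma \ref{ev} form an eigenbasis. The only cosmetic difference is that you start from the word length $w$ and derive $k$, while the paper fixes $k$ and checks word lengths $k$ and $k-1$; your extra care at the boundary $k=n$ and the explicit partition argument for the spanning claim are fine.
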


\begin{proof}
Let $k\in 2\Z\cap \{0,\dots,n\}$. By Lemma \ref{EV} and Lemma \ref{ev}, we get that an eigenvector $t_1^{i_1} \dots t_{n-1}^{i_{n-1}}$ of word length $k$ (here $k\neq n$, if $n$ is even) with respect to $t_1, \dots, t_{n-1}$ corresponds to the eigenvalue
\begin{align*}
(-1)^{i_1}+ \ldots +(-1)^{i_{n-1}}+(-1)^{i_1+\ldots+i_{n-1}} = -k + (n-1-k) + 1 = n-2k.
\end{align*}
Now consider an eigenvector $t_1^{i_1} \dots t_{n-1}^{i_{n-1}}$ of word length $k-1$. Then we get the eigenvalue
\begin{align*}
(-1)^{i_1}+ \ldots +(-1)^{i_{n-1}}+(-1)^{i_1+\ldots+i_{n-1}} = -(k-1) + (n-k) -1 = n-2k.
\end{align*}
We go through all the eigenvectors of Lemma \ref{ev} in this way and we obtain exactly the eigenvalues $\lambda_k = n-2k$. Since the eigenvectors of word lengths $k$ or $k-1$ are exactly those corresponding to $\lambda_k$, they form a basis of $E_{\lambda_k}$.
\end{proof}

\subsection{The quantum automorphism group of $FQ_{2m+1}$}
For the rest of this section, we restrict to the folded $n$-cube graphs, where $n =2m +1$ is odd. We show that in this case, the quantum automorphism group is $SO_n^{-1}$. We need the following lemma. 

\begin{lem}\label{P}
Let $\tau_1, \dots ,\tau_n$ be generators of $C^*(\Z_2^{n-1})$ with $\tau_i^2=1, \tau_i \tau_j = \tau_j \tau_i, \tau_n = \tau_1\dots \tau_{n-1}$ and let A be a $C^*$-algebra with elements $u_{ij} \in A$ fulfilling Relations \eqref{7.1} -- \eqref{7.4}. Let $(i_1, \dots, i_l) \in \{ 1,\dots, n\}^l$ with $i_a \neq i_b$ for $a \neq b$, where $1 \leq l \leq n$. Then
\begin{align*}
\sum_{j_1, \dots, j_l=1}^n \tau_{j_1}\dots\tau_{j_l} \otimes u_{j_1i_1} \dots u_{j_l i_l} = \sum_{\substack{j_1, \dots, j_{l};\\j_a\neq j_b \text{ for } a\neq b}} \tau_{j_1}\dots\tau_{j_{l}}\otimes u_{j_1i_1}\dots u_{j_{l}i_l}.
\end{align*} 
\end{lem}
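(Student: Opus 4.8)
The plan is to show that the right-hand sum is really the same as the left-hand sum by arguing that all ``collision'' terms---those where some $j_a = j_b$ with $a \neq b$---vanish. Fix a multi-index $(j_1, \dots, j_l)$ with a repetition, and let $a < b$ be such that $j_a = j_b =: j$. The idea is to look at the corresponding term $\tau_{j_1} \cdots \tau_{j_l} \otimes u_{j_1 i_1} \cdots u_{j_l i_l}$ and use Relations \eqref{7.3} and \eqref{7.4} on the second leg to push $u_{j i_b}$ leftward until it sits next to $u_{j i_a}$: each time we move it past a factor $u_{j_c i_c}$ with $a < c < b$, we have $j_c \neq j$ is not guaranteed, so we need to be slightly careful, but in fact whenever $j_c \neq j$ Relation \eqref{7.4} lets the two factors commute (since $i_c \neq i_b$ always, as the $i$'s are distinct), and whenever $j_c = j$ we can first reduce to that inner collision instead. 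So without loss of generality we may assume $b = a+1$, i.e. the repeated pair of second-leg indices is adjacent: $\dots u_{j i_a} u_{j i_{a+1}} \dots$ with $i_a \neq i_{a+1}$. By Relation \eqref{7.3}, $u_{j i_a} u_{j i_{a+1}} = - u_{j i_{a+1}} u_{j i_a}$.

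\textbf{The pairing.} Now I would set up an involution on the set of collision multi-indices that pairs each term with its negative. Given $(j_1, \dots, j_l)$ with $j_a = j_{a+1}$ (after the reduction above) define $\rho(j_1, \dots, j_l) = (j_1, \dots, j_{a-1}, j_{a+1}, j_a, j_{a+2}, \dots, j_l)$, i.e. transpose positions $a$ and $a+1$. On the first leg, since $\Z_2^{n-1}$ is abelian, $\tau_{j_1} \cdots \tau_{j_l}$ is unchanged by this swap. On the second leg, $u_{j_1 i_1} \cdots u_{j i_a} u_{j i_{a+1}} \cdots u_{j_l i_l}$ becomes $u_{j_1 i_1} \cdots u_{j i_{a+1}} u_{j i_a} \cdots u_{j_l i_l}$, which by Relation \eqref{7.3} equals the negative of the original. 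Hence the terms indexed by $(j_1, \dots, j_l)$ and $\rho(j_1, \dots, j_l)$ cancel. The map $\rho$ is an involution, so collision terms cancel in pairs---provided one checks $\rho$ has no fixed points among the relevant terms, which is immediate since swapping positions $a, a+1$ changes the tuple (the entries there being equal is exactly the collision we started with, but as a tuple it is genuinely moved only if... wait: if $j_a = j_{a+1}$ then $\rho$ fixes the tuple!).

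\textbf{The main obstacle.} This is the subtlety I expect to be the crux: when $j_a = j_{a+1}$, the transposition $\rho$ fixes the tuple, so the naive pairing argument breaks down exactly on the terms we want to kill. The correct move is instead to argue directly: for a fixed collision tuple, group the sum over all tuples that agree outside positions $\{a : j_a \text{ takes the repeated value}\}$, or more simply, observe that for the two-element subsum over the values at positions $a$ and $a+1$ with all other $j$'s frozen and with $j_a = j_{a+1} = j$ forced, the single term $u_{j_1 i_1}\cdots u_{j i_a} u_{j i_{a+1}} \cdots$ satisfies (by \eqref{7.3}) $x = -x$ after we also relabel---no. Let me restate: the clean argument is that $\sum_{j_1, \dots, j_l} (\cdots) - \sum_{j_a \neq j_b} (\cdots) = \sum_{\text{collisions}} (\cdots)$, and on each collision term, after reducing to an adjacent collision $j_a = j_{a+1} = j$ as above, Relation \eqref{7.3} gives $u_{ji_a}u_{ji_{a+1}} = -u_{ji_{a+1}}u_{ji_a}$; but also swapping the \emph{names} $i_a \leftrightarrow i_{a+1}$ in the summation index is not available since $(i_1,\dots,i_l)$ is fixed. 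So the genuinely correct statement is: each individual collision summand, taken alone, need not vanish; rather one pairs the summand at $(j_1,\dots,j_l)$ having its first collision at $(a,b)$ with the summand at the tuple obtained by swapping $j_a \leftrightarrow j_c$ for a suitable $c$ outside the collision, using \eqref{7.4} to commute and generate no sign while \eqref{7.3}... At this point I would follow the pattern of Lemma \ref{sumzero}: there, summands corresponding to $\sigma$ and $\sigma \circ (k\,n)$ cancel. Here, analogously, I would fix the positions of the repeated index and pair $\sigma$-type rearrangements of the remaining free indices, using anticommutativity \eqref{7.3} to produce the sign and commutativity \eqref{7.4} plus abelianness of the $\tau$'s to see the rest is unchanged; summing, the collision terms cancel. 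I expect making this pairing precise---choosing canonically which repeated pair and which transposition to use so that the involution is well-defined and fixed-point-free---to be the only real work in the proof.
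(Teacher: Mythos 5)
There is a genuine gap here: your proposed mechanism is the wrong one, and you never close the argument. You model the proof on Lemma \ref{sumzero}, trying to cancel the collision terms in pairs via a sign-producing involution coming from Relation \eqref{7.3}. As you yourself notice, the natural involution (transposing the two positions carrying the repeated index) fixes exactly the tuples you want to kill, and since the column indices $i_1,\dots,i_l$ are fixed there is no other relabelling available that produces the sign. This is not a fixable technicality: an individual collision term such as $u_{ji_a}u_{ji_b}$ with $i_a\neq i_b$ need not vanish, and the collision terms do not cancel in signed pairs at all. The proposal ends by deferring ``the only real work'' without doing it.

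The missing idea is Relation \eqref{7.2}, which you never invoke. The collision terms vanish because, with all the other $j$'s frozen, one sums over the common value $k$ of the colliding pair: on the first leg $\tau_k^2=1$ makes the term independent of $k$, so it factors out, while on the second leg one is left with $\sum_{k=1}^n u_{ki_a}u_{ki_b}=\delta_{i_a i_b}=0$ since $i_a\neq i_b$. This is an orthogonality (column) argument, not a parity argument. Concretely, the paper first handles adjacent collisions $j_s=j_{s+1}=k$ this way, reducing the sum to tuples with adjacent entries distinct; then for $j_s=j_{s+2}=k$ it uses Relation \eqref{7.4} (legitimate because after the first step $k\neq j_{s+1}$ and $i_s\neq i_{s+1}$) together with commutativity of the $\tau$'s to bring the two colliding factors next to each other without any sign, and applies \eqref{7.2} again; iterating removes all collisions. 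Your instinct to reduce to an adjacent collision by commuting factors is the right preliminary step, but the cancellation itself must come from \eqref{7.2}, not from \eqref{7.3}.
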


\begin{proof}
Let $j_{s} = j_{s+1} =k$ and let the remaining $j_l$ be arbitrary. Summing over $k$, we get
\begin{align*}
\sum_{k=1}^n \tau_{j_1} \dots \tau_{j_{s-1}} &\tau_k^2 \tau_{j_{s+2}}\dots \tau_{j_l} \otimes u_{j_1i_1}\dots u_{ki_s}u_{ki_{s+1}}\dots u_{j_li_l} \\
&= \tau_{j_1} \dots \tau_{j_{s-1}}\tau_{j_{s+2}}\dots \tau_{j_l}\otimes u_{j_1i_1}\dots \left(\sum_{k=1}^n u_{ki_s}u_{ki_{s+1}}\right)\dots u_{j_li_l}\\
&=0
\end{align*}
by Relation \eqref{7.2} since $i_s \neq i_{s+1}$. Doing this for all $s \in \{1, \dots, l-1\}$ we get 
\begin{align*}
\sum_{j_1, \dots, j_l=1}^n \tau_{j_1}\dots\tau_{j_l} \otimes u_{j_1i_1} \dots u_{j_l i_l} =\sum_{j_1\neq\dots \neq j_l} \tau_{j_1}\dots\tau_{j_l} \otimes u_{j_1i_1} \dots u_{j_l i_l}.
\end{align*}
Now, let $j_s = j_{s+2} = k$ and let $j_1 \neq \dots \neq j_l$. Since $k=j_s \neq j_{s+1}$ and $i_a \neq i_b$ for $a \neq b$, we have $u_{k i_s} u_{j_{s+1}i_{s+1}} = u_{j_{s+1}i_{s+1}}u_{k i_s}$. We also know that $\tau_k \tau_{j_{s+1}} = \tau_{j_{s+1}} \tau_k$ and thus
\begin{align*}
\sum_{k=1}^n &\tau_{j_1} \dots \tau_{j_{s-1}} \tau_k \tau_{j_{s+1}} \tau_k \tau_{j_{s+3}}\dots \tau_{j_l} \otimes u_{j_1i_1}\dots u_{ki_s}u_{j_{s+1}i_{s+1}}u_{ki_{s+2}}\dots u_{j_li_l}\\
&= \tau_{j_1} \dots \tau_{j_{s-1}} \tau_{j_{s+1}} \tau_{j_{s+3}}\dots \tau_{j_l} \otimes u_{j_1i_1}\dots u_{j_{s+1}i_{s+1}}\left(\sum_{k=1}^nu_{ki_s}u_{ki_{s+2}}\right)\dots u_{j_li_l}\\
&=0
\end{align*}
by Relation \eqref{7.2} since $i_s \neq i_{s+2}$. This yields
\begin{align*}
\sum_{j_1, \dots, j_l=1}^n \tau_{j_1}\dots\tau_{j_l} \otimes u_{j_1i_1} \dots u_{j_l i_l}=\sum_{\substack{j_1, \dots, j_{l};\\j_a\neq j_b \text{ for } 0< |a-b| \leq 2}} \tau_{j_1}\dots\tau_{j_{l}}\otimes u_{j_1i_1}\dots u_{j_{l}i_l}
\end{align*}
The assertion follows after iterating this argument $l$ times.
\end{proof}

We first show that $SO_n^{-1}$ acts on the folded $n$-cube graph. 

\begin{lem}\label{act}
For $n$ odd, the quantum group $SO_n^{-1}$ acts on $FQ_n$. 
\end{lem}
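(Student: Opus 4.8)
The goal is to produce an action of $SO_n^{-1}$ on $FQ_n$, which by definition means an action of $SO_n^{-1}$ on $C(V) \cong C^*(\Z_2^{n-1})$ whose associated magic-type unitary commutes with the adjacency matrix $\varepsilon$. The natural candidate is the ``linear'' action: since $FQ_n$ is the Cayley graph of $\Z_2^{n-1} = \langle \tau_1,\dots,\tau_n \mid \tau_i^2=1, \tau_i\tau_j = \tau_j\tau_i, \tau_n = \tau_1\cdots\tau_{n-1}\rangle$, I would define $\alpha : C^*(\Z_2^{n-1}) \to C^*(\Z_2^{n-1}) \otimes C(SO_n^{-1})$ on generators by
\begin{align*}
\alpha(\tau_i) = \sum_{j=1}^n \tau_j \otimes u_{ji}, \qquad 1 \leq i \leq n,
\end{align*}
exactly as in the Banica--Bichon--Collins treatment of the hypercube. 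The bulk of the work is checking this is well defined, i.e. that the elements $\alpha(\tau_i)$ satisfy the defining relations of $C^*(\Z_2^{n-1})$, and then that the induced magic unitary commutes with $\varepsilon$.

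First I would verify the relations. Self-adjointness of $\alpha(\tau_i)$ is clear from \eqref{7.1}. For $\alpha(\tau_i)^2 = 1$: expanding, $\alpha(\tau_i)^2 = \sum_{j,k} \tau_j\tau_k \otimes u_{ji}u_{ki}$; by Lemma \ref{P} (with $l=2$, the index pair being $(i,i)$ --- wait, that needs $i_a \neq i_b$, so more directly) one splits the sum into $j=k$ and $j\neq k$ parts. The $j=k$ part gives $\sum_j 1 \otimes u_{ji}^2$; the $j \neq k$ part pairs $(j,k)$ with $(k,j)$ and uses \eqref{7.3} ($u_{ji}u_{ki} = -u_{ki}u_{ji}$) together with $\tau_j\tau_k = \tau_k\tau_j$ to cancel. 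One then needs $\sum_j u_{ji}^2 = 1$, which is the ``column'' part of \eqref{7.2}. Commutativity $\alpha(\tau_i)\alpha(\tau_k) = \alpha(\tau_k)\alpha(\tau_i)$ for $i \neq k$: expand both sides, use Lemma \ref{P} to restrict to distinct indices $j \neq j'$, and then \eqref{7.4} gives $u_{ji}u_{j'k} = u_{j'k}u_{ji}$ for $j \neq j'$ while $\tau_j\tau_{j'}$ commute --- so the two expansions agree term by term. The relation $\tau_n = \tau_1\cdots\tau_{n-1}$ is the one genuinely using $n$ odd and Relation \eqref{7.5}: I expect $\alpha(\tau_1)\cdots\alpha(\tau_{n-1})$, after using Lemma \ref{P} to collapse to a sum over injective tuples $(j_1,\dots,j_{n-1})$, equals $\sum_{(i_1,\dots,i_{n-1})} \tau_{i_1}\cdots\tau_{i_{n-1}} \otimes u_{i_1 1}\cdots u_{i_{n-1}\,n-1}$, and by Lemma \ref{SO}(ii) the inner coefficient of $\tau_{i_1}\cdots\tau_{i_{n-1}}$ --- which, since the $i_a$ are distinct, equals $\tau_j$ where $j$ is the unique missing index --- sums precisely to $u_{jn}$; hence the whole thing is $\sum_{j} \tau_j \otimes u_{jn} = \alpha(\tau_n)$. (Here one should be slightly careful that $\tau_{i_1}\cdots\tau_{i_{n-1}} = \tau_j$ uses $\tau_1\cdots\tau_n = 1$, valid since $n$ is odd makes the exponents work out; this is where oddness enters.) By the universal property of $C^*(\Z_2^{n-1})$ this defines the $*$-homomorphism $\alpha$.

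Next, the coaction axioms: $(\mathrm{id}\otimes\Delta)\circ\alpha = (\alpha\otimes\mathrm{id})\circ\alpha$ is the standard bookkeeping check on generators using $\Delta(u_{ij}) = \sum_k u_{ik}\otimes u_{kj}$, and density of $\alpha(C^*(\Z_2^{n-1}))(1\otimes C(SO_n^{-1}))$ follows from invertibility of the matrix $u$ (so each $\tau_i\otimes 1$ lies in the span), as in Wang's argument. Finally, to see it is an action \emph{on the graph} I must check the associated matrix commutes with $\varepsilon$. By Corollary \ref{Eigen}'s mechanism it suffices to check $\alpha$ preserves each eigenspace $E_{\lambda_k}$; by Lemma \ref{ES}, $E_{\lambda_k}$ is spanned by the monomials $\tau_1^{i_1}\cdots\tau_{n-1}^{i_{n-1}}$ of word length $k$ or $k-1$, and since $\alpha$ is multiplicative and each $\alpha(\tau_i) = \sum_j \tau_j\otimes u_{ji}$ is a sum of single letters tensor scalars, applying Lemma \ref{P} again shows $\alpha(\tau_{s_1}\cdots\tau_{s_r})$ expands over \emph{distinct}-index monomials $\tau_{j_1}\cdots\tau_{j_r}$ of the same word length $r$ --- so word length (mod the relation $\tau_1\cdots\tau_n=1$, which identifies length-$r$ with length-$(n-r)$ monomials, exactly matching the length $k$/$k-1$ pairing in Lemma \ref{ES}) is preserved, giving $\alpha(E_{\lambda_k}) \subset E_{\lambda_k}\otimes C(SO_n^{-1})$. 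Then Theorem \ref{preserve} shows the magic unitary commutes with each spectral projection, hence with $\varepsilon$.

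\textbf{Main obstacle.} The delicate point is the verification of $\alpha(\tau_n) = \alpha(\tau_1)\cdots\alpha(\tau_{n-1})$ and, relatedly, that the eigenspace-preservation argument is consistent with the relation $\tau_1\cdots\tau_n = 1$ in $C^*(\Z_2^{n-1})$ --- that is, one must make sure the reduction via Lemma \ref{P} to distinct-index monomials interacts correctly with identifying a word of length $r$ with its length-$(n-r)$ complement, and that Relation \eqref{7.5} (equivalently Lemma \ref{SO}(ii)) is exactly the input needed for the top relation to hold. This is precisely the step where $n$ odd is essential, and where the analogy with the hypercube ($O_n^{-1}$ acting on $Q_n$, no determinant relation needed) breaks down and must be replaced by the $SO$-condition.
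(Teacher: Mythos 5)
Your overall architecture matches the paper's: define a linear coaction on a generating set of $\Z_2^{n-1}$ satisfying the presentation $\tau_i^2=1$, $\tau_i\tau_j=\tau_j\tau_i$, $\tau_n=\tau_1\cdots\tau_{n-1}$, verify the relations via Lemma \ref{P} and Lemma \ref{SO} (this part of your outline is essentially the paper's argument), check density, and conclude by eigenspace preservation and Theorem \ref{preserve}. But there is a genuine gap in your choice of generating set. You take the $\tau_i$ to be the Cayley connection-set generators (the paper's $t_1,\dots,t_n$) and define $\alpha(t_i)=\sum_j t_j\otimes u_{ji}$ ``exactly as for the hypercube.'' For $n\geq 5$ these generators do \emph{not} lie in a common eigenspace of the adjacency matrix: $t_1,\dots,t_{n-1}$ have $t$-word length $1$ and eigenvalue $n-4$, while $t_n=t_1\cdots t_{n-1}$ has length $n-1$ and eigenvalue $-n+2$. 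Hence $\alpha(t_n)=\sum_j t_j\otimes u_{jn}$ already leaves $E_{-n+2}\otimes C(SO_n^{-1})$, the associated matrix cannot commute with $\varepsilon$, and this $\alpha$ is not an action on the graph. Correspondingly, your final step is incorrect: the word lengths $r$ and $n-r$ produced by the relation $\tau_1\cdots\tau_n=1$ do \emph{not} match the $k$/$(k-1)$ pairing of Lemma \ref{ES} --- in $t$-word length, $E_{\lambda_k}$ pairs lengths $k$ and $k-1$, not $r$ and $n-r$; these agree only when $|n-2r|=1$, so only for $n=3$ does your version go through in full.

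The fix --- and the actual content of the paper's proof --- is to define the coaction linearly not on the connection set but on the basis $\tau_i:=t_1\cdots\check{t_i}\cdots t_{n-1}=t_nt_i$ ($1\leq i\leq n-1$), $\tau_n:=t_n$ of the single $n$-dimensional eigenspace $E_{-n+2}$. Because $n$ is odd these elements satisfy the same presentation of $\Z_2^{n-1}$ (each $t_j$ occurs $n-2$ times in $\tau_1\cdots\tau_{n-1}$, so $\tau_1\cdots\tau_{n-1}=(t_1\cdots t_{n-1})^{n-2}=t_n=\tau_n$), and all of your algebraic verifications --- $(\tau_i')^2=1$, commutativity, the top relation via Lemma \ref{P} and Lemma \ref{SO}, coassociativity, density --- go through verbatim for these $\tau_i$. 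The eigenspace argument then works because, rewritten in $\tau$-word length, $E_{\lambda_k}$ is spanned by $\tau$-monomials of lengths $k$ and $n-k$, which is exactly the $\{r,n-r\}$ pairing your expansion produces. Without this change of generators the proposed action fails already for the Clebsch graph ($n=5$).
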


\begin{proof}
We need to show that there exists an action 
\begin{align*}
\alpha: C(V_{FQ_n}) \to C(V_{FQ_n}) \otimes C(SO_n^{-1}), \qquad \alpha(e_i) = \sum_{j=1}^{|V_{FQ_n}|} e_j \otimes v_{ji}
\end{align*}
 such that $(v_{ij})$ commutes with the adjacency matrix of $FQ_n$. By Fourier transform, this is the same as getting an action 
\begin{align*}
\alpha: C^*(\Z_2^{n-1}) \to C^*(\Z_2^{n-1}) \otimes C(SO_n^{-1}),
\end{align*}
where we identify the functions on the vertex set of $FQ_n$ with $C^*(\Z_2^{n-1})$. 
We claim that 
\begin{align*}
\alpha(\tau_i) = \sum_{j=1}^n \tau_j \otimes u_{ji}
\end{align*}
gives the answer, where 
\begin{align*}
\tau_i = t_1\dots \check{t_i}\dots t_{n-1} \text{ for } 1 \leq i \leq n-1, \qquad \tau_n = t_n
\end{align*}
 for $t_i$ as in Lemma \ref{ev} and $(u_{ij})$ is the fundamental corepresentation of $SO_n^{-1}$. Here $\check{t_i}$ means that $t_i$ is not part of the product. These $\tau_i$ generate $C^*(\Z_2^{n-1})$, with relations $\tau_i = \tau_i^*, \tau_i^2=1, \tau_i\tau_j = \tau_j\tau_i$ and $\tau_n = \tau_1\dots\tau_{n-1}$.
Define 
\begin{align*}
\tau_i' = \sum_{j=1}^n \tau_j \otimes u_{ji}.
\end{align*}

To show that $\alpha$ defines a *-homomorphism, we have to show that the relations of the generators $\tau_i$ also hold for $\tau_i'$. It is obvious that $(\tau_i')^* = \tau_i'$. Using Relations \eqref{7.2}--\eqref{7.4} it is straightforward to check that $(\tau'_i)^2=1$ and $\tau_i'\tau_j'=\tau_j'\tau_i'$. Now, we show $\tau'_n = \tau'_1\dots\tau'_{n-1}$. By Lemma \ref{P}, it holds
\begin{align*}
\tau'_1\dots\tau'_{n-1} &= \sum_{\substack{i_1, \dots, i_{n-1};\\i_a\neq i_b \text{ for } a\neq b}} \tau_{i_1}\dots\tau_{i_{n-1}}\otimes u_{i_11}\dots u_{i_{n-1}n-1}\\
&= \sum_{j=1}^n \sum_{(i_1,\dots,i_{n-1}) \in I_j} \tau_{i_1}\dots\tau_{i_{n-1}}\otimes u_{i_11}\dots u_{i_{n-1}n-1},
\end{align*}
where $I_j =\{ (i_1,\dots, i_{n-1}) \in \{1,\dots, n\}^{n-1} \, | \, i_a \neq i_b \text{ for } a \neq b, i_s \neq j \text{ for all } s\}$ like in Lemma \ref{SO}. For all $(i_1,\dots,i_{n-1}) \in I_j$, we know that $\tau_{i_1}\dots\tau_{i_{n-1}} = \tau_1\dots \check{\tau_j}\dots \tau_n$. Using $\tau_n = \tau_1\dots\tau_{n-1}$ and $\tau_i^2=1$, we get $\tau_1\dots \check{\tau_j}\dots \tau_n= \tau_j$ and thus
\begin{align*}
 \sum_{j=1}^n \sum_{(i_1,\dots,i_{n-1}) \in I_j} &\tau_{i_1}\dots\tau_{i_{n-1}}\otimes u_{i_11}\dots u_{i_{n-1}n-1}\\
 &= \sum_{j=1}^n \left(\tau_j \otimes \sum_{(i_1,\dots,i_{n-1}) \in I_j}u_{i_11}\dots u_{i_{n-1}n-1}\right).
\end{align*}
The equivalent formulation of Relation \eqref{7.5} in Lemma \ref{SO} yields 
\begin{align*}
 \tau'_1\dots\tau'_{n-1} = \sum_{j=1}^n \left( \tau_j \otimes \sum_{(i_1,\dots,i_{n-1}) \in I_j}u_{i_11}\dots u_{i_{n-1}n-1} \right)
= \sum_{j=1}^n \tau_j \otimes u_{jn} = \tau'_n.
\end{align*}
Summarising, the map $\alpha$ exists and is a *-homomorphism. It is straightforward to check that $\alpha$ is unital and since $u$ is a corepresentation, $\alpha$ is coassociative.  

Now, we show that $\alpha(C^*(\Z_2^{n-1}))(1 \otimes C(SO_n^{-1}))$ is linearly dense in $C^*(\Z_2^{n-1}) \otimes C(SO_n^{-1})$. It holds
\begin{align*}
\sum_{i=1}^n \alpha(\tau_i)(1 \otimes u_{ki}) =  \sum_{j=1}^n \left( \tau_j \otimes \sum_{i=1}^n u_{ji}u_{ki} \right)= \sum_{j=1}^n \tau_j \otimes \delta_{jk} = \tau_k \otimes 1,
\end{align*}
thus $(\tau_k \otimes 1) \in \alpha(C^*(\Z_2^{n-1}))(1 \otimes C(SO_n^{-1}))$ for $1 \leq k \leq n$. Since $\alpha$ is unital, we also get $1 \otimes C(SO_n^{-1}) \subset \alpha(C^*(\Z_2^{n-1}))(1 \otimes C(SO_n^{-1}))$. By a standard argument, see for example \cite[Section 4.2]{SWe}, we get that $\alpha(C^*(\Z_2^{n-1}))(1 \otimes C(SO_n^{-1}))$ is linearly dense in $C^*(\Z_2^{n-1}) \otimes C(SO_n^{-1})$. 

It remains to show that the magic unitary matrix associated to $\alpha$ commutes with the adjacency matrix of $FQ_n$. 
We want to show that $\alpha$ preserves the eigenspaces of the adjacency matrix, i.e. $\alpha(E_\lambda) \subset E_\lambda \otimes C(SO_n^{-1})$ for all eigenspaces $E_\lambda$, then Theorem \ref{preserve} yields the assertion. Since it holds $t_j = \tau_j \tau_n$, by Lemma \ref{ev} we have eigenvectors
\begin{align*}
\hat{w}_{i_1 \dots i_{n-1}} = t_1^{i_1} \dots t_{n-1}^{i_{n-1}} = \begin{cases}  \tau_1^{i_1} \dots \tau_{n-1}^{i_{n-1}} &\text{ for } \sum_{k=1}^{n-1} i_k \text{ even} \\  \tau_1^{1-i_1}  \dots \tau_{n-1}^{1-i_{n-1}}&\text{ for } \sum_{k=1}^{n-1} i_k \text{ odd}\end{cases}
\end{align*}
corresponding to the eigenvalues $\lambda_{i_1 \dots i_{n-1}}$ as in Lemma \ref{EV}. Using Lemma \ref{ES}, we see that the eigenspaces $E_{\lambda_k}$ are spanned by eigenvectors $\tau_1^{i_1} \dots \tau_{n-1}^{i_{n-1}}$ of word lengths $k$ or $n-k$, where we consider the word length with respect to $\tau_1, \dots, \tau_{n-1}$. 

Let $1 \leq l \leq n-1$. By Lemma \ref{P}, we have for $i_1,\dots, i_l$, $i_a \neq i_b$ for $a \neq b$:
\begin{align*}
\alpha(\tau_{i_1} \dots \tau_{i_l}) = \sum_{\substack{j_1, \dots, j_{l};\\j_a\neq j_b \text{ for } a\neq b}} \tau_{j_1}\dots \tau_{j_l} \otimes u_{j_1 i_1} \dots u_{j_l i_l}.
\end{align*}
For $\tau_{j_1} \dots \tau_{j_l}$, where $j_s \neq n$ for all $s$, we immediately get that this is in the same eigenspace as $\tau_{i_1} \dots \tau_{i_l}$ since $\tau_{j_1} \dots \tau_{j_l}$ has the same word length as $\tau_{i_1} \dots \tau_{i_l}$. Take now $\tau_{j_1} \dots \tau_{j_l}$, where we have $j_s =n$ for some $s$. We get
\begin{align*}
\tau_{j_1}\dots \tau_{j_l} &= \tau_{j_1}\dots \check{\tau_{j_s}}\dots \tau_{j_l}\tau_n\\
&= \tau_{j_1}\dots \check{\tau_{j_s}}\dots \tau_{j_l}\tau_1 \dots \tau_{n-1}, 
\end{align*}
which has word length $n-1-(l-1) = n-l$, thus it is in the same eigenspace as $\tau_{i_1} \dots \tau_{i_l}$. 
This yields 
\begin{align*}
\alpha(E_\lambda) \subset E_\lambda \otimes C(SO_n^{-1}),
\end{align*}
for all eigenspaces $E_\lambda$ and thus $SO_n^{-1}$ acts on $FQ_n$ by Theorem \ref{preserve}.
\end{proof}

Now, we can prove our main theorem. 

\begin{thm}\label{main}
For $n$ odd, the quantum automorphism group of the folded $n$-cube graph $FQ_n$ is $SO_n^{-1}$. 
\end{thm}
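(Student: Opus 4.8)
The plan is to invert the canonical surjection $\phi\colon C(\QBan(FQ_n))\to C(SO_n^{-1})$ provided by Lemma \ref{act}. First I transport the fundamental coaction of $\QBan(FQ_n)$ on the vertices through the Fourier transform of Section \ref{Fourier} to a coaction $\alpha\colon C^*(\Z_2^{n-1})\to C^*(\Z_2^{n-1})\otimes C(\QBan(FQ_n))$, which by Corollary \ref{Eigen} preserves every eigenspace of $FQ_n$. By Lemmas \ref{EV}, \ref{ev} and \ref{ES}, together with the word-length bookkeeping from the proof of Lemma \ref{act}, the eigenvalue $2-n$ has an $n$-dimensional eigenspace $E$ with basis $\tau_1,\dots,\tau_n$ (the elements defined in Lemma \ref{act}). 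Hence eigenspace preservation forces $\alpha(\tau_i)=\sum_{j=1}^n\tau_j\otimes w_{ji}$ for uniquely determined $w_{ji}\in C(\QBan(FQ_n))$; since $w=(w_{ij})$ is the restriction of $\alpha$ to the invariant subspace $E$, it is a finite-dimensional corepresentation of the compact quantum group $\QBan(FQ_n)$, and in particular $w$ is invertible.

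I then verify that the $w_{ij}$ satisfy Relations \eqref{7.1}--\eqref{7.5}, which by the universal property of $C(SO_n^{-1})$ yields a $*$-homomorphism $\psi\colon C(SO_n^{-1})\to C(\QBan(FQ_n))$, $u_{ij}\mapsto w_{ij}$. Relation \eqref{7.1} is immediate from $\tau_i=\tau_i^*$ and uniqueness of the expansion of $\alpha(\tau_i)$ in the basis $\{\tau_j\}$. For \eqref{7.2} I apply $\alpha$ to $\tau_i\tau_j$: this element lies in $E_{\lambda_0}\otimes C(\QBan(FQ_n))$ (equal to $1\otimes 1$) when $i=j$ and in $E_{\lambda_2}\otimes C(\QBan(FQ_n))$ when $i\neq j$, so the coefficient of the unit in its expansion along the group basis is $\delta_{ij}\,1$; comparing with $\alpha(\tau_i)\alpha(\tau_j)=\sum_{k,l}\tau_k\tau_l\otimes w_{ki}w_{lj}$, where only the terms $k=l$ contribute to that coefficient, gives $\sum_k w_{ki}w_{kj}=\delta_{ij}$, i.e. $w^{t}w=1$; invertibility of the corepresentation $w$ upgrades this to $ww^{t}=1$, so $\sum_k w_{ik}w_{jk}=\delta_{ij}$ and $w$ is a unitary corepresentation. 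For \eqref{7.3} I apply $\alpha$ to $\tau_i^{2}=1$ and compare coefficients of the linearly independent group elements $\tau_k\tau_l$ ($k<l$) in $\alpha(\tau_i)^{2}=1\otimes 1$; using $\tau_k\tau_l=\tau_l\tau_k$ this forces $w_{ki}w_{li}+w_{li}w_{ki}=0$ for $k\neq l$, and applying the antipode, which sends $w_{ij}\mapsto w_{ji}$, yields the row version. Relation \eqref{7.4} is extracted from the analogous comparison of $\alpha(\tau_i)\alpha(\tau_j)$ with $\alpha(\tau_j)\alpha(\tau_i)$ (legitimate since $\tau_i\tau_j=\tau_j\tau_i$), matching coefficients of the $\tau_k\tau_l$ and substituting the relations already proved. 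Finally, for \eqref{7.5} I apply $\alpha$ to $\tau_n=\tau_1\cdots\tau_{n-1}$: as \eqref{7.1}--\eqref{7.4} now hold for the $w_{ij}$, Lemma \ref{P} rewrites $\alpha(\tau_1)\cdots\alpha(\tau_{n-1})$ as $\sum_{j=1}^n\tau_j\otimes\sum_{(i_1,\dots,i_{n-1})\in I_j}w_{i_11}\cdots w_{i_{n-1}n-1}$, exactly as in the proof of Lemma \ref{act}, and comparison with $\alpha(\tau_n)=\sum_j\tau_j\otimes w_{jn}$ produces condition (ii) of Lemma \ref{SO}, which is equivalent to \eqref{7.5}.

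To close, I argue that $\phi$ and $\psi$ are mutually inverse. Since $\alpha$ is a $*$-homomorphism sending the generators $\tau_1,\dots,\tau_{n-1}$ of $C^*(\Z_2^{n-1})$ into $C^*(\Z_2^{n-1})\otimes C^*(w_{ij})$, it maps all of $C^*(\Z_2^{n-1})$ there; transporting back through the Fourier transform shows that every entry of the magic unitary of $\QBan(FQ_n)$ lies in $C^*(w_{ij})$, so the $w_{ij}$ generate $C(\QBan(FQ_n))$. By the construction in Lemma \ref{act} one has $\phi(w_{ij})=u_{ij}$, the generators of $C(SO_n^{-1})$, hence $\phi\circ\psi=\mathrm{id}$ on the generators of $C(SO_n^{-1})$ and $\psi\circ\phi=\mathrm{id}$ on the generating set $\{w_{ij}\}$ of $C(\QBan(FQ_n))$. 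Therefore $\phi$ is an isomorphism and $\QBan(FQ_n)=SO_n^{-1}$.

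The step I expect to be the main obstacle is extracting the relations the coaction does not display directly: $\alpha(\tau_i)=\sum_j\tau_j\otimes w_{ji}$ only carries information indexed by the \emph{column} $i$, so the row anticommutation in \eqref{7.3} and the commutation \eqref{7.4} must be squeezed out either via the antipode or by repeatedly resubstituting the relations already established, and the order of the verification has to be arranged so that Lemma \ref{P} (which presupposes \eqref{7.1}--\eqref{7.4}) is invoked only at the very last step, for \eqref{7.5}. A secondary point needing care is the passage from $w^{t}w=1$ to full bi-orthogonality, which relies on the fact that a finite-dimensional corepresentation of a compact quantum group is invertible.
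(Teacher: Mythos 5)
Your proposal is correct and follows essentially the same route as the paper: use the surjection from Lemma \ref{act}, apply the universal action to the eigenspace $E_{-n+2}$ spanned by $\tau_1,\dots,\tau_n$ to extract a matrix $w$, verify Relations \eqref{7.1}--\eqref{7.4} and then \eqref{7.5} via Lemmas \ref{P} and \ref{SO}, and conclude that the two surjections are mutually inverse. The only difference is that you spell out the derivation of \eqref{7.1}--\eqref{7.4} (coefficient comparison in the group basis, antipode for the row relations), which the paper delegates to the hyperoctahedral-graph argument of Banica--Bichon--Collins.
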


\begin{proof}
By Lemma \ref{act} we get a surjective map $C(G_{aut}^+(FQ_n)) \to C(SO_n^{-1})$.
We have to show that this is an isomorphism between $C(SO_n^{-1})$ and \linebreak$C(G_{aut}^+(FQ_n))$. Consider the universal action on $FQ_n$
\begin{align*}
\beta: C^*(\Z_2^{n-1}) \to C^*(\Z_2^{n-1}) \otimes C(G_{aut}^+(FQ_n)).
\end{align*}
Consider $\tau_1, \dots ,\tau_n $ like in Lemma \ref{act}. They have word length $n-2$ or $n-1$ with respect to $t_1, \dots, t_{n-1}$ and they form a basis of $E_{-n+2}$ by Lemma \ref{ES}. Therefore, we get elements $x_{ij}$ such that 
\begin{align*}
\beta(\tau_i)= \sum_{j=1}^n \tau_j \otimes x_{ji}
\end{align*} 
by Corollary \ref{Eigen}. Similar to \cite{hyperoctahedral} one shows that $x_{ij}$ fulfill Relations \eqref{7.1}--\eqref{7.4}. It remains to show that Relation \eqref{7.5} holds. Applying $\beta$ to $\tau_n = \tau_1\dots \tau_{n-1}$ and using Lemma \ref{P} yields
\begin{align*}
\sum_{j} \tau_j \otimes x_{jn} = \beta(\tau_n)&= \sum_{\substack{i_1, \dots ,i_{n-1};\\i_a\neq i_b \text{ for } a\neq b}}\tau_{i_1}\dots \tau_{i_{n-1}}\otimes x_{i_11}\dots x_{i_{n-1}n-1}\\
&=\sum_{j=1}^n \sum_{(i_1,\dots,i_{n-1}) \in I_j} \tau_{i_1}\dots\tau_{i_{n-1}}\otimes x_{i_11}\dots x_{i_{n-1}n-1}.
\end{align*}
As in the proof of Lemma \ref{act}, we have $\tau_{i_1}\dots\tau_{i_{n-1}}=\tau_j$ for $(i_1, \dots, i_{n-1}) \in I_j$ and we get
\begin{align*}
\sum_{j=1}^n \tau_j \otimes x_{jn} = \sum_{j=1}^n \left(\tau_j \otimes \sum_{(i_1,\dots,i_{n-1}) \in I_j}x_{i_11}\dots x_{i_{n-1}n-1}\right).
\end{align*}
We deduce 
\begin{align*}
x_{jn} =  \sum_{(i_1,\dots,i_{n-1}) \in I_j}x_{i_11}\dots x_{i_{n-1}n-1},
\end{align*}
which is equivalent to Relation \eqref{7.5} by Lemma \ref{SO}. Thus, we also get a surjective map $C(SO_n^{-1}) \to C(G_{aut}^+(FQ_n))$ which is inverse to the map $C(G_{aut}^+(FQ_n)) \to C(SO_n^{-1})$.
\end{proof}

\,
\begin{rem}~
\begin{itemize}
\item[(i)] It was asked in \cite{survey} by Banica, Bichon and Collins to investigate the quantum automorphism group of the Clebsch graph. Since the $5$-folded cube graph is the Clebsch graph we get $G_{aut}^+(\Gamma_{Clebsch}) = SO_5^{-1}$.
\item[(ii)] The 3-folded cube graph is the full graph on four points, thus our theorem yields $S_4^+ = SO_3^{-1}$, as already shown in \cite{4points}. 
\end{itemize}
\end{rem}

\begin{rem}
We do not have a similar theorem for folded cube graphs $FQ_n$ with $n$ even, since the eigenspace $E_{-n+2}$ behaves different in the odd case. 
\end{rem}

\bibliographystyle{plain}
\bibliography{Clebschgraph}

\end{document}